\newcommand{\url}[1]{#1} 
\definecolor{gray}{rgb}{0.2,0.2,.2}
\newcommand{\BMHC}{}
\newcommand{\EMHC}{}
\newcommand{\ul}[1]{\underline{#1}}
\newcommand{\ol}[1]{{\overline{#1}}}
\newcommand{\bigpar}{\par\quad\newline\noindent}
\newcommand{\dint}[1]{\,\mathrm{d}#1}
\newcommand{\fspace}[1]{{\mathsf{#1}}}
\newcommand{\fspaceL}{\fspace{L}}
\newcommand{\Rset}{{\mathbb{R}}}
\newcommand{\Nset}{{\mathbb{N}}}
\newcommand{\ccinterval}[2]{[#1,\,#2]}%
\newcommand{\DO}[1]{{O\at{#1}}}
\newcommand{\skp}[2]{{\left\langle{#1},\,{#2}\right\rangle}}
\newcommand{\bskp}[2]{{\big\langle{#1},\,{#2}\big\rangle}}
\newcommand{\at}[1]{{\left({#1}\right)}}
\newcommand{\bat}[1]{{\big(#1\big)}}
\newcommand{\Bat}[1]{{\Big(#1\Big)}}
\newcommand{\quadruple}[4]{{\left({#1},\,{#2},\,{#3},\,{#4}\right)}}
\newcommand{\norm}[1]{\|{#1}\|}
\newcommand{\bnorm}[1]{\big\|{#1}\big\|}
\newcommand{\abs}[1]{\left|{#1}\right|}
\newcommand{\babs}[1]{\big|{#1}\big|}
\newcommand{\ga}{{\gamma}}
\newcommand{\la}{{\lambda}}
\newcommand{\si}{{\sigma}}
\newcommand{\calA}{\mathcal{A}}
\newcommand{\calC}{\mathcal{C}}
\newcommand{\calF}{\mathcal{F}}
\newcommand{\calG}{\mathcal{G}}
\newcommand{\calP}{\mathcal{P}}
\newcommand{\calR}{\mathcal{R}}
\newcommand{\calV}{\mathcal{V}}
\theoremstyle{plain}
\newtheorem{theorem}{Theorem}[]
\newtheorem{proposition}   [theorem]{Proposition}
\newtheorem*{result*}{Main result}
\newtheorem*{problem*}{Open problems}
\newtheorem{assumption} [theorem]{Assumption}
\begin{document}
%
%
%
\title{High-energy waves in superpolynomial FPU-type chains}
\date{\today}
\author{%
  Michael Herrmann\footnote{
     {\tt{michael.herrmann@uni-muenster.de}},
     Institut f\"ur Numerische und Angewandte Mathematik,
     Universit\"at M\"unster
  }
}%
\maketitle
%
%
%
%
\begin{abstract}
We consider periodic traveling waves in FPU-type chains with superpolynomial interaction forces and derive explicit asymptotic formulas for the high-energy limit as well as bounds for the corresponding approximation error. In the proof we adapt twoscale techniques that have recently been developed by Herrmann and Matthies for chains with singular potential and provide an asymptotic ODE for the scaled distance profile.
\end{abstract}
%
%
%
\quad\newline\noindent%
\begin{minipage}[t]{0.15\textwidth}%
  Keywords:
\end{minipage}%
\begin{minipage}[t]{0.8\textwidth}%
\emph{asymptotic analysis}, \emph{nonlinear lattice waves}, \emph{high-energy limit in atomic chains}
\end{minipage}%
\medskip
\newline\noindent
\begin{minipage}[t]{0.15\textwidth}%
  MSC (2010):
\end{minipage}%
\begin{minipage}[t]{0.8\textwidth}%
37K60,  
37K40,  
74H10  	
\end{minipage}%
%
%
%
\setcounter{tocdepth}{4}
\setcounter{secnumdepth}{2}
{\scriptsize{\tableofcontents}}
%

%
%
\section{Introduction}\label{sec:Intro}
%

Hamiltonian \BMHC lattices \EMHC appear in many branches of 
physics and materials science as simple dynamical models for spatially discrete systems with energy conservation. Of particular importance are coherent structures such as traveling waves, which can be regarded as the nonlinear fundamental modes and provide the building blocks for more complex solutions. 
\par
From a mathematical point of view, Hamiltonian lattice waves are rather complicate because they are not determined by ODEs -- as the counterparts in spatially continuous systems -- but by nonlocal advance-delay-differential equations. In the prototypical case of a one-dimensional chain with nearest neighbor interactions, which we refer to as Fermi-Pasta-Ulam or FPU-type chain, the problem consists in finding scalar profile functions $\calV$ and $\calR$ along with a speed $\si$ such that
\begin{align}
\label{Eqn:TW}
\BMHC-\EMHC\si \calR^\prime\at{x}=\calV\at{x+\tfrac12}-\calV\at{x-\tfrac12}\,,\qquad 
\BMHC-\EMHC\si \calV^\prime\at{x}=\Phi^\prime\Bat{\calR\at{x+\tfrac12}}-\Phi^\prime\bat{\calR\at{x-\tfrac12}}
\end{align}
is satisfied for all $x\in\Rset$. Here, $\Phi^\prime$ is the derivative of
the interaction potential and $x=j-\si t$ denotes the continuous phase variable, that is
the spatial coordinate in the co-moving frame. The profiles $\calV$ and $\calR$ are related to the atomic velocities and distances, respectively, since we have
\begin{align*}
\dot{u}_j\at{t}=\calV\at{j-\si t}\,,\qquad u_{j+1}\at{t}-u_j\at{t}=\calR\bat{j+\tfrac12-\si t}
\end{align*}
for any traveling wave in  the FPU-type chain $\ddot{u}_j=\Phi^\prime\at{u_{j+1}-u_j}-\Phi^\prime\at{u_j-u_{j-1}}$.
\par
For the integrable examples -- the linear case, the Toda chain, and  the hard-sphere model -- the complete solution set to the nonlocal equation \eqref{Eqn:TW} is known, see \cite{Tes01,DHM06} and the references therein. For general potentials, however, there exists nowadays a sophisticated existence theory for different types of traveling waves, see the discussion below, but only very little is known about their qualitative properties. In particular, there is no result that guarantees the uniqueness or the dynamical stability of traveling waves in a chain with arbitrary but convex interaction potential. Mathematically, this lack of information is intimately related to the characteristic features of \eqref{Eqn:TW}.
First, the advance-delay structure prevents the exploitation of many concepts and methods that have been proven powerful in the context of low-dimensional ODEs and delay equations. Second, the underlying lattice flow is symplectic and exhibits a richer dynamical behavior than dissipative particle systems. 
\par
Since a complete understanding of the general case is currently out of reach, we are  
naturally interested in special potentials and asymptotic regimes that allow for almost explicit or approximate solutions of \eqref{Eqn:TW} and hence to investigate the open mathematical problems in a simplified setting. The most prominent example is the near-sonic limit, in which nonlinear lattices can be modeled by the Korteweg-de Vries equation. The advance-delay-differential equations \eqref{Eqn:TW} can hence by replaced by a planar ODE, see \cite{FP99,FM02,FM03,Miz11,FM14,HML15} for
rigorous approximation formulas, \cite{FP02,FP04a,FP04b,HW08,HW09,Miz13} for stability proofs, \BMHC and \cite{DP14,GMWZ14,GP14} for generalizations to granular and polyatomic chains. \EMHC We also refer to \cite{IJ05,DHM06} for the small-amplitude limit, \BMHC to \cite{Jam12} for the case of near-binary interactions, \EMHC and to \cite{TV05,SZ09,TV10,SZ12,HMSZ13,TV14} for almost explicit traveling wave solutions in chains with piecewise linear force terms or small perturbations of such systems.
\par
Another asymptotic regime is the high-energy limit for chains with singular potential in which particle interactions can be interpreted as elastic collisions, see \cite{FM02,Tre04}. It has recently been shown in \cite{HM15} by a rigorous twoscale expansion that traveling waves in this limit are determined by an asymptotic ODE initial-value problem, which governs the behavior of the appropriately scaled distance profile. This observation provides a good starting point for further investigations 
because the limiting ODE problem yields the explicit scaling law for the wave speed as well as global approximation formulas for both the distance and the velocity profile. 
\par
In this paper we study the high-energy limit for potentials that do not possess a singularity but grow exponentially. Our overall strategy is similar to \cite{HM15} since we likewise employ local scalings in order to characterize the fine properties of the wave profiles near the critical positions and up to high accuracy. The details, however, are different because we have to impose other scaling relations and to deviate in the justification of the asymptotic estimates. Moreover, the final approximation formulas are more explicit because in our case the asymptotic ODE turns out to be integrable.
%
%
\subsection{Family of traveling waves and high-energy limit}
%
The literature contains many results that establish the existence
of periodic or solitary waves in FPU-type chains with fairly general interaction potential. We refer, for instance, to \cite{FW94,FV99,Her10} for constrained optimization, to \cite{SW97,Pan05} for critical-point techniques, and to \cite{IJ05} for center-manifold reduction. There is, however, no related uniqueness result although it is commonly believed that the different approaches yield -- at least for convex potentials -- the same family of waves. 
\par
In what follows we restrict our considerations to a periodic setting (the solitary case can be studied along the same lines) and construct traveling waves by the constrained maximization of a potential-energy functional subject to certain constraints. More precisely, we study a \emph{normalized velocity profile} $V$, which is supposed to be $2L$-periodic,
along with the optimization problem
\begin{align}
\label{Eqn:OptProb}
\text{Maximize} \quad \calP_\delta\at{V}\quad \text{subject to}\quad \norm{V}_2=1\quad \text{and}\quad\BMHC V\in \calC\EMHC\,, 
\end{align}
where the parameter $\delta>0$ becomes small in the high-energy-limit. The potential-energy functional $\calP_\delta$ is defined by
\begin{align}
\label{Eqn:DefOpA}
\calP_\delta\at{V}:=\int\limits_{-L}^{+L}\Phi\at{\frac{\bat{\calA V}\at{x}}{\delta}}\dint{x}\,,\qquad 
\bat{\calA V}\at{x}:=\int\limits_{x-\tfrac12}^{x+\tfrac12} V\at{\tilde{x}}\dint{\tilde{x}}\,,
\end{align} 
where the operator $\calA$ can be viewed as the convolution with the indicator function of the symmetric unit interval and provides the \emph{normalized distance profile} via $R:=\calA V$. Moreover, $\norm{\cdot}_p$ abbreviates for any $p\in\ccinterval{1}{\infty}$ the usual Lebesgue norm, evaluated on the periodicity cell $\ccinterval{-L}{+L}$, and the cone $\calC$ consists of all functions that are even, nonnegative, and unimodal on the periodicity cell, i.e.,
\begin{align}
\label{Eqn:DefCone}
\calC:=\Big\{ V\;:\; V\at{-x}=V\at{x}\geq V\at{\tilde{x}}\geq0 \;\;\text{for almost all}\;\;0\leq x\leq\tilde{x}\leq L \Big\}\,.
\end{align}
The key observation, see Proposition \ref{Prop:Existence} below, is that any solution $V_\delta$ to the constrained optimization problem \eqref{Eqn:OptProb} satisfies the integral equation
\begin{align}
\label{Eqn:TWInt}
\la_\delta^2 V_\delta=\frac{1}{\delta}\calA\Phi^\prime\at{\frac{\calA V_\delta}{\delta}}
\end{align}
for some multiplier $\la_\delta^2>0$, and  differentiation with respect to $x$ reveals
\begin{align}
\label{Eqn:ScaledTW}
R_\delta^\prime\at{x}=V_\delta\at{x+\tfrac12}-V_\delta\at{x-\tfrac12}\,,\qquad 
 V_\delta^\prime\at{x}=\frac{1}{\delta\la_\delta^2}
\at{\Phi^\prime\at{\frac{R_\delta\bat{x+\tfrac12}}{\delta}}
-
\Phi^\prime\at{\frac{R_\delta\bat{x-\tfrac12}}{\delta}}
}\,,
\end{align}
where $R_\delta$ is shorthand for $\calA V_\delta$. In particular, any maximizer of $\calP_\delta$ provides via
\begin{align}
\label{Eqn:PhysScalings}
\calV_\delta\at{x}:=\BMHC-\EMHC\la_\delta V_\delta\at{x}\,,\qquad \calR_\delta\at{x}:=
\frac{R_\delta\at{x}}{\delta}\,,\qquad \si_\delta:=\la_\delta\delta
\end{align}
a solution to \eqref{Eqn:TW} and hence a periodic traveling wave in the chain. It is, however, more convenient to work with the normalized profiles $V_\delta$ and $R_\delta$ because these converge in the high-energy limit $\delta\to0$ while $\calV_\delta$ and $\calR_\delta$ become unbounded.
\par
In this paper we fix $2<L<\infty$ and \BMHC assume that the force term $\Phi^\prime\at{r}$ grows like $r^\mu \exp\at{r}$, where $\mu\in\Rset$ is a given parameter.
A prototypical example (with $\mu=m$) is \EMHC 
\begin{align*}
\Phi^\prime\at{r}=\bat{r^{m}+c_{m-1}r^{m-1}+c_1r^1}\exp\at{r}\,,\qquad m\in\Nset\,,\qquad c_1,\,..,\,c_{m-1}\geq0
\end{align*}
\BMHC but our setting also covers (with $\mu=0$) the Toda case, in which the particles interact via\EMHC 
\begin{align}
\label{Eqn:TodaPot}
\Phi_{\mathrm{Toda}}^\prime\at{r}=\exp\at{r}-1\,.
\end{align} 
\BMHC More generally, in what follows we study the following class of FPU-type chains, where some constants have been normalized to ease the presentation.\EMHC
\begin{assumption}[properties of the force function] 
\label{Ass:Pot}
$\Phi^\prime$ can be written as
\begin{align*}
\Phi^\prime\at{r}=\Psi\at{r}\exp\at{r}\,,
\end{align*}
where $\Psi:\Rset_+\to\Rset_+$ is continuously differentiable with 
\begin{align*}
\Psi\at{0}=0
\end{align*}
and satisfies
\begin{align*}
\Psi\at{r}>0\,,\qquad
\Psi^\prime\at{r}+\Psi\at{r}\geq0\,,\qquad
\abs{\frac{\Psi\at{r}}{r^\mu}-1}\leq \frac{C}{r}\qquad \text{for all}\quad r>0
\end{align*}
and some constants $C>0$, $\mu\in\Rset$. In particular, the interaction potential $\Phi$ is convex on $\Rset_+$, attains a global minimum in $0$ and grows like $r^\mu\exp\at{r}$ as $r\to\infty$.
\end{assumption}
The asymptotic analysis presented below is based on the following result, which is illustrated in Figure \ref{Fig:Profiles} and proven in Appendix \ref{app} using arguments from \cite{Her10}. Notice that the convexity of $\Phi$ -- or equivalently, the monotonicity of $\Phi^\prime$ -- is needed to ensure the existence wave profiles within the cone $\calC$.
\begin{figure}[t]%
\centering{%
\includegraphics[width=0.95\textwidth]{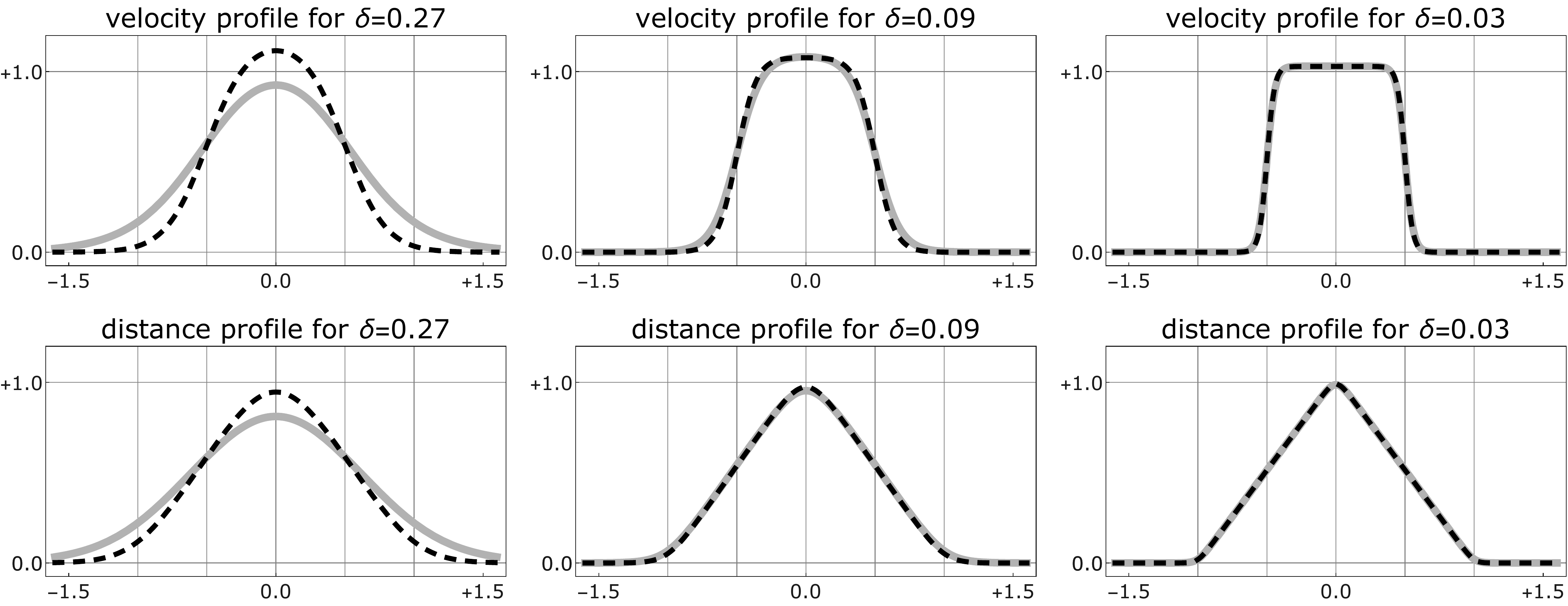}%
}%
\caption{ %
\emph{Top row}: Numerically computed velocity profile $V_\delta$ 
(black, dashed) along with the analytic approximation $\ol{V}_\delta$
from \eqref{Eqn:AppProfiles} (gray, solid) for $\Phi^\prime\at{r}=\at{r^2+2r}\exp\at{r}$ and three different values of $\delta$. In the high-energy limit $\delta\to0$, both $V_\delta$ and $\ol{V}_\delta$ converge to the indicator function $V_0$ but $\norm{V_\delta-\ol{V}_\delta}_p$ is asymptotically much smaller than  $\norm{V_\delta-V_0}_p$. \emph{Bottom row}: The corresponding distance profiles $R_\delta$ and $\ol{R}_\delta$, which approach the tent map $R_0$ as $\delta\to0$. The scaled approximation errors are plotted in Figure \ref{Fig:AppErrors} and the numerical scheme is briefly described at the end of Appendix \ref{app}.
} %
\label{Fig:Profiles}%
\end{figure} %
\begin{proposition}[family of traveling waves and convergence in the high-energy limit]
\label{Prop:Existence}%
For any $\delta>0$ there exist a maximizer $V_\delta$ and a Lagrangian multiplier $\la_\delta^2>0$ such that 
\begin{enumerate}
\item the traveling wave equations \eqref{Eqn:TWInt} and 
\eqref{Eqn:ScaledTW} are satisfied,
\item $V_\delta$ fulfills the norm constraints $\norm{V_\delta}_1=1$,
\item both $V_\delta$ and $R_\delta$ are $2L$-periodic, belong to $\calC$, and are pointwise positive.
\end{enumerate} 
Moreover, we have
\begin{align}
\label{Prop:Existence.Eqn1}
\bnorm{V_\delta-V_0}_2\;+\;
\bnorm{R_\delta-R_0}_\infty\quad\xrightarrow{\delta\to0}\quad 0\,,
\end{align}
where $V_0$ is the indicator function of the interval $\ccinterval{-\tfrac12}{+\tfrac12}$ and 
$R_0=\calA V_0$ is a tent map. 
\end{proposition}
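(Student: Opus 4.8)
The plan is to establish the three assertions in order: existence of a constrained maximizer, the first-order optimality condition that yields \eqref{Eqn:TWInt}--\eqref{Eqn:ScaledTW} together with the stated qualitative properties, and finally the high-energy convergence \eqref{Prop:Existence.Eqn1}. I use the $L^2$-normalization $\norm{V}_2=1$ from \eqref{Eqn:OptProb} and, without loss of generality, $\Phi\at0=0$. \textbf{Existence.} I would argue by the direct method. The decisive a priori bound is that on the admissible set the distance profile never exceeds one: since $\at{\calA V}\at{x}=\int_{x-1/2}^{x+1/2}V\at{\tilde{x}}\dint{\tilde{x}}$ and the interval has length one, Cauchy--Schwarz gives $\at{\calA V}\at{x}\le\norm{V}_2=1$, whence $\Phi\at{\calA V/\delta}\le\Phi\at{1/\delta}$ pointwise and $\calP_\delta$ is finite and bounded above. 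Take a maximizing sequence $V_n\in\calC$ with $\norm{V_n}_2=1$. As each $V_n$ is even and nonincreasing on $\ccinterval{0}{L}$, Helly's selection theorem extracts a subsequence converging almost everywhere to an even, nonincreasing, nonnegative limit $V_\delta\in\calC$. The only mechanism that could destroy $L^2$-mass in the limit is concentration at the origin; but a concentrating sequence satisfies $\norm{V_n}_1\to0$ and hence $\calA V_n\to0$ uniformly, forcing $\calP_\delta\at{V_n}\to 2L\,\Phi\at0=0$. Since the supremum is strictly positive -- the competitor $V_0$ already makes $\calP_\delta$ exponentially large for small $\delta$ -- concentration is excluded, the convergence is strong in $L^2$ and $\norm{V_\delta}_2=1$. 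Dominated convergence with the integrable majorant $\Phi\at{1/\delta}$ then gives $\calP_\delta\at{V_n}\to\calP_\delta\at{V_\delta}$, so $V_\delta$ is a genuine maximizer.

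\textbf{Euler--Lagrange equation and qualitative properties.} The Lagrange multiplier rule for the equality constraint $\norm{V}_2=1$, combined with the self-adjointness of $\calA$ (convolution with a symmetric kernel), produces the stationarity relation \eqref{Eqn:TWInt}, initially only as a variational inequality over the tangent cone of the convex cone $\calC$ at $V_\delta$. The delicate point is to upgrade this to the pointwise identity: here I would use that the right-hand operator $V\mapsto\tfrac1\delta\calA\Phi^\prime\at{\calA V/\delta}$ maps $\calC$ into itself, because $\calA$ preserves evenness, nonnegativity and unimodality and $\Phi^\prime>0$ is increasing. Since its image is already admissible, the inequality cannot be strict and \eqref{Eqn:TWInt} holds as an equation. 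It then follows that $V_\delta=\tfrac{1}{\la_\delta^2\delta}\calA\Phi^\prime\at{\calA V_\delta/\delta}>0$ pointwise, a bootstrap through the smoothing operator $\calA$ yields regularity, and differentiation produces \eqref{Eqn:ScaledTW}; finally $R_\delta=\calA V_\delta$ is even, positive and unimodal because convolution preserves symmetric unimodality, while periodicity is built into the construction on $\ccinterval{-L}{+L}$.

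\textbf{High-energy convergence.} This is the analytically soft part and rests on a Laplace-type comparison. Maximality gives $\calP_\delta\at{V_\delta}\ge\calP_\delta\at{V_0}$, and evaluating $\int_{-L}^{+L}\Phi\at{R_0\at{x}/\delta}\dint{x}$ near the tent peak $R_0\at0=1$ by Watson's lemma yields $\calP_\delta\at{V_0}\gtrsim\delta^{1-\mu}\mhexp{1/\delta}$. Conversely, writing $m_\delta:=\at{\calA V_\delta}\at0=\skp{V_\delta}{V_0}\le1$ and using that $\calA V_\delta$ is unimodal with maximum $m_\delta$, monotonicity of $\Phi$ gives $\calP_\delta\at{V_\delta}\le2L\,\Phi\at{m_\delta/\delta}\lesssim\delta^{-\mu}\mhexp{m_\delta/\delta}$. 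Combining the two bounds forces $\mhexp{\at{1-m_\delta}/\delta}\lesssim\delta^{-1}$, hence $1-m_\delta=\DO{\delta\log\at{1/\delta}}$. The Hilbert-space identity
\[
\bnorm{V_\delta-V_0}_2^2=\norm{V_\delta}_2^2-2\skp{V_\delta}{V_0}+\norm{V_0}_2^2=2\at{1-m_\delta}
\]
then gives $\bnorm{V_\delta-V_0}_2\to0$, and since $\calA$ maps $L^2$ into $L^\infty$ with $\bnorm{\calA W}_\infty\le\norm{W}_2$ we conclude $\bnorm{R_\delta-R_0}_\infty=\bnorm{\calA\at{V_\delta-V_0}}_\infty\le\bnorm{V_\delta-V_0}_2\to0$.

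\textbf{Main obstacle.} I expect the genuine difficulty to sit entirely in the existence step and in the passage from the variational inequality to the equation \eqref{Eqn:TWInt}: the cone $\calC$ must do double duty, supplying both the compactness (through monotonicity and the exclusion of concentration) and the self-improvement that turns stationarity into an identity, all while the exponential integrand is kept under control only through the a priori bound $\calA V\le1$. Once \eqref{Eqn:TWInt} is secured, the convergence \eqref{Prop:Existence.Eqn1} follows from the soft Laplace comparison and the Cauchy--Schwarz identity above, which even deliver the quantitative rate $\bnorm{V_\delta-V_0}_2=\DO{\at{\delta\log\at{1/\delta}}^{1/2}}$.
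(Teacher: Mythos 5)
Your Euler--Lagrange step and your convergence step are, modulo packaging, the paper's own arguments (Appendix \ref{app}). The upgrade from the variational inequality to the identity \eqref{Eqn:TWInt} rests on exactly the observation on which the paper's ``improvement operator'' $\calF_\delta\at{V}=\calG_\delta\at{V}/\norm{\calG_\delta\at{V}}_2$, $\calG_\delta\at{V}=\tfrac1\delta\calA\Phi^\prime\at{\calA V/\delta}$, is built, namely that $\calG_\delta$ maps $\calC$ into itself: testing the variational inequality with $W=\calF_\delta\at{V_\delta}$ gives $0\geq\skp{\calG_\delta\at{V_\delta}}{\calF_\delta\at{V_\delta}-V_\delta}=\norm{\calG_\delta\at{V_\delta}}_2-\skp{\calG_\delta\at{V_\delta}}{V_\delta}\geq0$, i.e.\ equality in Cauchy--Schwarz, which is precisely the paper's fixed-point identity $V_\delta=\calF_\delta\at{V_\delta}$ obtained there via the convexity inequality \eqref{Eqn:App4}. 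Likewise your high-energy convergence (comparison with $V_0$, the unimodal upper bound $\calP_\delta\at{V_\delta}\leq 2L\,\Phi\at{R_\delta\at{0}/\delta}$, the identity $\norm{V_\delta-V_0}_2^2=2-2R_\delta\at{0}$, and $\norm{\calA W}_\infty\leq\norm{W}_2$ from \eqref{Eqn:App2}) coincides with the paper's, and your explicit rate $\DO{\at{\delta\log\at{1/\delta}}^{1/2}}$ is a correct refinement.

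The genuine gap is in the existence step. Your dichotomy --- either the Helly limit retains the full $L^2$ mass, or the sequence concentrates, in which case $\norm{V_n}_1\to0$ and hence $\calP_\delta\at{V_n}\to0$ --- is false, because mass can be lost \emph{partially}. Take for instance $V_n\at{x}=\sqrt{n/2}$ for $\abs{x}<\tfrac{1}{2n}$, $V_n\at{x}=\sqrt{1/2}$ for $\tfrac{1}{2n}\leq\abs{x}\leq\tfrac12$, and $V_n\at{x}=0$ otherwise: these functions lie in $\calC$, satisfy $\norm{V_n}_2^2=1-\tfrac{1}{2n}$, and converge almost everywhere to a limit with squared norm $\tfrac12$, yet $\norm{V_n}_1\to\sqrt{1/2}\neq0$ and $\calP_\delta\at{V_n}\not\to0$. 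So your argument does not exclude that a maximizing sequence loses part of its norm in the limit, and neither the asserted strong $L^2$ convergence nor $\norm{V_\delta}_2=1$ follows from what you wrote. The hole is fixable with an ingredient you never invoke but the paper makes explicit: the strict monotonicity of $s\mapsto\calP_\delta\at{sV}$. Indeed, your dominated-convergence step works for \emph{any} Helly limit (use the unimodal envelope $V_n\at{x}\leq\at{2\abs{x}}^{-1/2}$, integrable near the origin, to pass to the limit in $\calA V_n$, and $\Phi\at{1/\delta}$ as majorant for the energy density), so the a.e.\ limit $V_\delta$ satisfies $\calP_\delta\at{V_\delta}=\sup\calP_\delta>0$; in particular $V_\delta\neq0$, and if $\theta:=\norm{V_\delta}_2<1$ then $V_\delta/\theta$ would be admissible with $\calP_\delta\at{V_\delta/\theta}>\calP_\delta\at{V_\delta}$, a contradiction, whence $\theta=1$. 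The paper sidesteps the issue entirely by maximizing over the weakly compact convex set $\ol{B}_1\at{0}\cap\calC$, using compactness of $\calA$ for weak continuity of $\calP_\delta$, and then the same monotonicity argument to push the maximizer onto the sphere; either repair is fine, but as submitted the existence proof is not complete.
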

%
%
\subsection{Main result and discussion}
%
%
In this paper we improve the convergence part of Proposition \ref{Prop:Existence}  by adapting the twoscale approach from \cite{HM15}. The key step is 
to study the convergence under the \emph{tip scaling} in \S\ref{sect:TipScal}, which describes the fine structure of $R_\delta$ in the vicinity of $x=0$ and \BMHC  reveals
the aforementioned asymptotic ODE initial-value problem. In our case, this problem reads\EMHC 
\begin{align}
\label{Eqn:LimitODEforS}
S_0^{\prime\prime}\at{y}= 2\exp\bat{-S_0\at{y}}\,,\qquad 
S_0^{\prime}\at{y}=S_0\at{y}=0\,,
\end{align}
\BMHC and admits the explicit solution
\begin{align}
\label{Eqn:limS}
S_0\at{y}=2\ln\bat{\cosh\at{y}}\,.
\end{align}
\EMHC
Afterwards we employ in \S\ref{sect:TranFootScal} the \emph{transition scaling} and the \emph{foot scaling} in order to characterize the jump-like behavior of the velocity profile $v_\delta$ near $x=\pm\tfrac12$ and the turn of the distance profiles near $x=\pm1$, respectively. In this way we identify the scaling law for the speed parameter $\la_\delta$ as well as refined asymptotic formulas for the normalized profiles, see \S\ref{sect:ScalLaws} and \S\ref{sect:Formulas}, respectively. Our main findings are formulated in Proposition \ref{Prop:ScalingRelations} and Theorem \ref{Thm:Approximation}, and can be summarized as follows.
\begin{result*} In the high-energy limit,  
the wave profiles from Proposition \ref{Prop:Existence} can be approximated by
\begin{align}
\label{Eqn:AppProfiles}
\ol{V}_\delta\at{x}:=\frac{\at{1+\delta}\sinh\at{\tfrac{1}{2\delta}}}{\cosh\at{\tfrac{1}{2\delta}}+\cosh\at{\frac{x}{\delta}}}\,,\qquad
\ol{R}_\delta\at{x}:=\at{\delta+\delta^2}\ln\at{1+\frac{\sinh^2\at{\tfrac{1}{2\delta}}}{\cosh^2\at{\tfrac{x}{2\delta}}}}
\end{align}
and we have 
\begin{align}
\label{Eqn:AppSpeed}
\la_\delta^2\delta^\mu\exp\at{-\frac{1}{\delta}}\quad \xrightarrow{\;\delta\to0\;}\quad \exp\at{1}\,.
\end{align}
More precisely, there exist a constant $C$ independent of $\delta$ such that
\begin{align}
\label{Eqn:AppResult}
\bnorm{V_\delta-\ol{V}_\delta}_p\leq C\delta \bnorm{V_\delta-V_0}_p\leq C\delta^{1+1/p}
\,,\qquad
\bnorm{R_\delta-\ol{R}_\delta}_p\leq C\delta \bnorm{R_\delta-R_0}_p\leq C\delta^2
\end{align}
holds for any $1\leq{p}\leq\infty$, where the norms have to be evaluated 
on the periodicity cell $\ccinterval{-L}{+L}$.
\end{result*}
A similar result holds in the case of solitary waves ($L=\infty$) but the corresponding proof is more technical, see the \BMHC related remarks to Theorem~\ref{Thm:Approximation} \EMHC and in Appendix \ref{app}. Our method can also be applied to other superpolynomially growing functions $\Phi^\prime$ but  the fundamental scaling relations and the proof details might be different. Unfortunately, the approach does not cover the high-energy limit of polynomial interaction potentials:
If $\Phi^\prime$ is a polynomial of degree $m$, the solution set to \eqref{Eqn:TWInt} exhibits asymptotically the invariance $\quadruple{\delta}{V}{R}{\la^2}\leftrightsquigarrow
\quadruple{\eta\delta}{\eta V}{\eta R}{\la^2/\eta}$ and the limit $\delta\to0$ is hence not governed by an ODE but by the advance-delay-differential equations \eqref{Eqn:ScaledTW} with $\delta=1$ and $\Phi^\prime\at{r}=r^m$. In this case, however, the arguments presented below can be adapted to derive 
asymptotic formulas for the limit $m\to\infty$.
\par
We also recall that \cite{HM15} concerns Lennard-Jones-type potentials with algebraic singularity of order $m>1$, which requires to modify the fundamental scaling relations and the details in the asymptotic analysis. Moreover, the corresponding asymptotic ODE  $S^{\prime\prime}_0\at{y}=c_m\at{1+S_0\at{y}}^{-m-1}$ cannot be integrated analytically, so the resulting approximation formulas are less explicit than \eqref{Eqn:AppProfiles}, and the guaranteed error bounds are not given by \eqref{Eqn:AppResult} but take another form and depend on $\delta^m$.
\par
\BMHC
The asymptotic ODE \eqref{Eqn:LimitODEforS} reveals that the dynamics of the distance $r_j=u_{j+1}-u_{j}$ between the particles $j$ and $j{+}1$ is for some times almost completely determined by $r_j$ itself and hence independent of $r_{j-1}$ and $r_{j+1}$. On the conceptional level, the high-energy limit can therefore be interpreted as the asymptotic regime in which the interactions between nearest neighbors can be neglected. The similar concept of an anticontinuum limit has been studied for a huge class of many-particle systems such as the discrete nonlinear Schr\"odinger equation, Klein-Gordan systems, and dimer chains (see, e.g. \cite{MKA94,FDMF02,BP13,CCCK14}). There is, however, an important difference: The decoupling, which manifests in the tip scaling, describes the effective dynamics of $r_j$ only for certain times, namely those corresponding to $-\tfrac12 < x < \tfrac12$, whereas the complete information on the waves can only be obtained by combining the different scalings. The foot scaling, for instance, reflects that for $-\tfrac32< x< -\tfrac12$ and $+\tfrac12< x<+\tfrac32$ the dynamics of $r_j$ is asymptotically governed by $r_{j+1}$ and $r_{j-1}$, respectively. 
\bigpar
We conclude the introduction with some comments on the Toda \BMHC chain \eqref{Eqn:TodaPot}. \EMHC For this integrable model, periodic traveling waves can be expressed in terms of Jacobi elliptic functions, see \cite{Tod67,Whi74}, and we readily justify that 
\begin{align*}
\calV_{\mathrm{Toda},\,\beta}\at{x}:=\BMHC-\EMHC\frac{2\sinh^2\at{\tfrac{1}{\beta}}}{\cosh\at{\tfrac{1}{\beta}}+\cosh\at{\frac{2x}{\beta}}}\,,\qquad
\calR_{\mathrm{Toda},\,\beta}\at{x}:=\ln\at{1+\frac{\sinh^2\at{\frac{1}{\beta}}}{\cosh^2\at{\frac{x} {\beta}}}}
\end{align*}
\BMHC along with \EMHC 
\begin{align*}
{\si}_{\mathrm{Toda},\,\beta}:=\beta\sinh\at{\tfrac{1}{\beta}}
\end{align*}
provide for any $\beta>0$ an exact solitary wave with \BMHC unimodal and \EMHC exponentially decaying wave profiles. We thus infer from \eqref{Eqn:AppResult} that the non-normalized profile functions $\calV_\delta$ and $\calR_\delta$ from \eqref{Eqn:PhysScalings} have for small $\delta>0$ basically the same shape as the solitary Toda wave with $\beta=2\delta$, but the  wave speed and the wave amplitudes scale differently since \eqref{Eqn:AppSpeed} depends on $\mu$, the leading algebraic order of the perturbation term $\Psi$. 
\par
\BMHC We also emphasize that for $\Phi^\prime=\Phi^\prime_{\mathrm{Toda}}$
the error \EMHC terms $\norm{V_\delta-\ol{V}_\delta}_p$ and $\norm{R_\delta-\ol{R}_\delta}_p$ are \emph{not}, as one might guess, exponentially small in $\delta$ but as large as the \BMHC related \EMHC bounds in \eqref{Eqn:AppResult}. This seems to be surprising but just means that the formulas in \eqref{Eqn:AppProfiles} are not optimal for the Toda chain. In fact, our approach applied to \eqref{Eqn:TodaPot} also provides high-energy approximations with exponential accuracy, see the discussion at the end of \S\ref{sect:Formulas}, but such formulas can directly be derived by asymptotic analysis of Jacobi's elliptic functions. For a nontrivial perturbation term $\Psi$, however, there are no explicit solution formulas and numerical results indicate that the estimates in \eqref{Eqn:AppResult} are optimal, see Figure \ref{Fig:AppErrors}. 
%
%
\section{Asymptotic analysis}
%
The starting point for our considerations is the
second-order advance-delay-differential equation
\begin{align}
\label{Eqn.SOADDE}
R_\delta^{\prime\prime}\at{x}=\frac{1}{\delta\la_\delta^2}
\at{\Phi^\prime\at{\frac{R_\delta\bat{x+1}}{\delta}}+
\Phi^\prime\at{\frac{R_\delta\bat{x-1}}{\delta}}-2
\Phi^\prime\at{\frac{R_\delta\bat{x}}{\delta}}
}\,,
\end{align}
which can be derived from \eqref{Eqn:ScaledTW} by eliminating the velocity profile $V_\delta$. It is also convenient to introduce the scalar quantities
\begin{align}
\label{Eqn:DefPrmsAandB}
a_\delta:=\frac{\delta}{R_\delta\at{0}}\,,\qquad b_\delta:=\sqrt{\delta^{2}a_\delta^{\mu}\la_\delta^2 \exp\at{-\frac{1}{a_\delta}}}\,,
\end{align}
which  appear naturally in many of the asymptotic formulas and estimates identified below. From \eqref{Prop:Existence.Eqn1} we immediately infer that the amplitude parameter $a_\delta$ satisfies
\begin{align}
\label{Eqn:LimitPrmA}
\frac{a_\delta}{\delta}\quad \xrightarrow{\delta\to0}\quad\frac{1}{R_0\at{0}}=1
\end{align}
but at this moment we do not know how $b_\delta$ depends on $\delta$. Below, however, we see that $b_\delta$ is an intrinsic length which determines the width of the transition layers in the velocity profile $V_\delta$ near $x=\pm\tfrac12$ as well as the fine properties of the distance profile $R_\delta$ near $x=0$ and $x=\pm1$. These observations finally imply $b_\delta=2\delta+\DO{\delta^2}$ and enable us to identify the scaling law for $\la_\delta$, see Proposition \ref{Prop:ScalingRelations}.
\bigpar
Throughout the paper, we denote by $c$ and $C$ generic positive constants -- being rather small and large, respectively -- which are independent of $\delta$ but can depend on the function $\Psi$ from Assumption~\ref{Ass:Pot} and the periodicity parameter $L$. In all proofs we further assume -- without saying so explicitly-- that $\delta$ is positive but sufficiently small. 
\subsection{Tip scaling}\label{sect:TipScal}
%
\begin{figure}[t]%
\centering{ %
\includegraphics[width=0.95\textwidth]{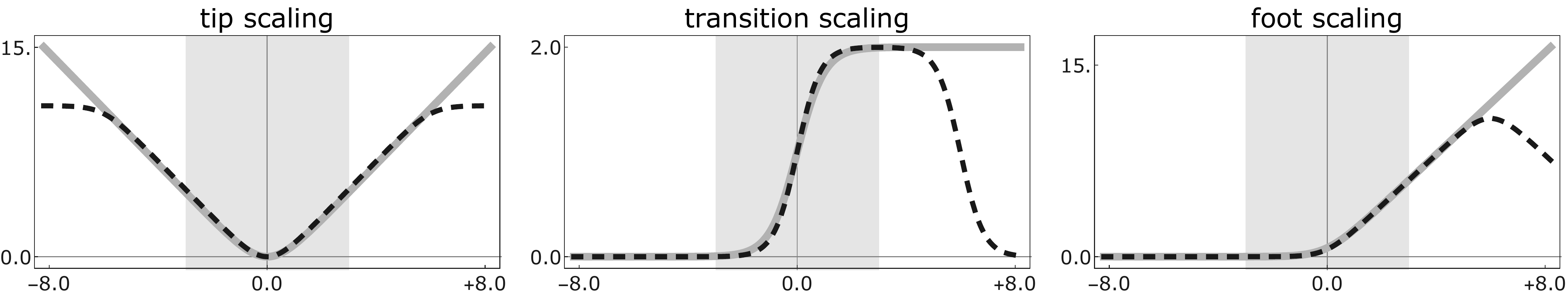} %
} %
\caption{ %
The scaled wave profiles $S_\delta$ (\emph{left}), $W_\delta$ (\emph{center}), and $T_\delta$ (\emph{right}) --  see \eqref{Eqn:DefTipScaling}, \eqref{Eqn:DefTranScaling}, and \eqref{Eqn:DefFootScaling} -- for the second simulation from Figure \ref{Fig:Profiles} as function of \BMHC the scaled space variable $y$ from \eqref{Eqn:ScaledSpace}. \EMHC The solid gray and dashed black lines represent the exact values for $\delta=0$ and the numerical data for $\delta=0.09$, respectively. The shaded region indicates the interval $I_\delta$ from \eqref{Eqn:DefIntI} and the corresponding approximation error is shown in Figure \ref{Fig:Errors}.%
} \label{Fig:Scaling}%
\end{figure} %
Our first goal is to describe the `tip of the tent', that is the 
asymptotic behavior of the distance profiles near $x=0$. To this end we introduce a scaled space variable $y\in\Rset$ by
\begin{align}
\label{Eqn:ScaledSpace}
x=b_\delta y
\end{align}
as well as the scaled distance profile $S_\delta$ via
\begin{align}
\label{Eqn:DefTipScaling}
S_\delta\at{y}:=\frac{1}{a_\delta}-\frac{1}{\delta}R_\delta\at{b_\delta y}\,,
\end{align}
which is even and satisfies
\begin{align}
\label{Eqn:IVforS}
S_\delta\at{0}=S_\delta^\prime\at{0}=0\,.
\end{align}
The key ingredient to our asymptotic analysis is to show that the scaled distance profile satisfies  
\begin{align*}
S_\delta^{\prime\prime}\at{y}\approx 2\exp\bat{-S_\delta\at{y}}
\end{align*}
on the interval
\begin{align}
\label{Eqn:DefIntI}
I_\delta=\left[-y_\delta^*,\,+y_\delta^*\right]\,,\qquad y_\delta^*:=\frac{1}{2b_\delta}\,,
\end{align}
which corresponds to the domain $\abs{x}\leq\tfrac12$. We can therefore approximate $S_\delta$ by \BMHC $S_0$ from \eqref{Eqn:limS}, which is the solution to the ODE initial-value problem \eqref{Eqn:LimitODEforS}. \EMHC The graphs of $S_\delta$ and $S_0$ are illustrated in the left panel of Figure \ref{Fig:Scaling}.
\begin{proposition}[convergence under the tip scaling]
\label{Prop:ConvTipScal}
There exists a constant $C$ such that
\begin{align}
\label{Prop:ConvTipScal.Eqn1}
\sup_{y\in I_\delta}\; \babs{S_\delta\at{y}-S_0\at{y}}+
\babs{S_\delta^{\prime}\at{y}-S_0^{\prime}\at{y}}+
\babs{S_\delta^{\prime\prime}\at{y}-S_0^{\prime\prime}\at{y}}\;\leq\; C \delta
\end{align}
and
\begin{align}
\label{Prop:ConvTipScal.Eqn2}
\babs{y_\delta^*S_\delta^\prime\at{y_\delta^*}-y_\delta^*S_0^\prime\at{y_\delta^*}}
\;\leq \; C\delta\,,\qquad 
\int\limits_{I_\delta} 
\babs{S_\delta^{\prime}\at{y}-S_0^{\prime}\at{y}}\dint{y}\;\leq\; C \delta
\end{align}
hold for all sufficiently small $\delta>0$.
\end{proposition}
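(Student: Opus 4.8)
My plan is to turn the advance--delay equation \eqref{Eqn.SOADDE} into a genuine second-order ODE for $S_\delta$ that is a small perturbation of \eqref{Eqn:LimitODEforS}, and then to compare its solution with the explicit profile \eqref{Eqn:limS} by an energy/Gronwall argument adapted to the fact that $I_\delta$ from \eqref{Eqn:DefIntI} has length of order $1/\delta$.

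\textbf{Reduction to a perturbed ODE.} First I would differentiate \eqref{Eqn:DefTipScaling} twice and substitute \eqref{Eqn.SOADDE} evaluated at $x=b_\delta y$. Writing $\Phi'=\Psi\exp$ and using the definitions \eqref{Eqn:DefPrmsAandB} together with $R_\delta(b_\delta y)/\delta = 1/a_\delta - S_\delta(y)$, the prefactor $b_\delta^2/(\delta^2\lambda_\delta^2)$ collapses to $a_\delta^\mu\exp(-1/a_\delta)$ and the diagonal term of \eqref{Eqn.SOADDE} becomes exactly $2a_\delta^\mu\Psi(1/a_\delta-S_\delta)\exp(-S_\delta)$. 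Hence $S_\delta''=2\exp(-S_\delta)+h_\delta$, where the residual $h_\delta$ collects the coefficient defect $2[a_\delta^\mu\Psi(1/a_\delta-S_\delta)-1]\exp(-S_\delta)$ and the two shifted contributions involving $R_\delta(b_\delta y\pm1)$. For the coefficient defect I would use the third bound in Assumption~\ref{Ass:Pot}, i.e. $\Psi(r)=r^\mu(1+O(1/r))$, together with \eqref{Eqn:LimitPrmA}, to get the pointwise estimate $C\delta(1+S_\delta)\exp(-S_\delta)$. For the shifted terms I would exploit that $R_\delta$ is even and unimodal (Proposition~\ref{Prop:Existence}) so that, for $|y|\le y_\delta^*$, the arguments $R_\delta(b_\delta y\pm1)$ stay below $R_\delta(\tfrac12)$, which converges to $\tfrac12$ and is thus bounded away from the tip value $R_\delta(0)\to1$; the prefactor $\exp(-1/a_\delta)$ then dominates the growth of $\Phi'$ and makes these terms uniformly exponentially small on $I_\delta$.

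\textbf{A priori bounds.} Because the right-hand side is positive up to exponentially small terms, $S_\delta$ is convex, even and nonnegative with $S_\delta'\ge0$ on $[0,y_\delta^*]$. Multiplying the ODE by $S_\delta'$ and integrating gives the approximate first integral $\tfrac12(S_\delta')^2=2(1-\exp(-S_\delta))+\int_0^y S_\delta'h_\delta$; since $\int(1+S)\exp(-S)\dint{S}$ converges, the defect is $O(\delta)$, which yields the uniform bounds $0\le S_\delta'\le C$ and hence $S_\delta(y)\le C|y|$ on $I_\delta$. These in turn make the estimate on $h_\delta$ fully effective, in particular $\int_{I_\delta}|h_\delta|\le C\delta$.

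\textbf{Comparison and the main obstacle.} Setting $D_\delta=S_\delta-S_0$ I obtain $D_\delta''=2\exp(-S_0)(\exp(-D_\delta)-1)+h_\delta$ with zero Cauchy data, where the linearised coefficient $2\exp(-S_0)=2\,\mathrm{sech}^2$ is integrable over $\Rset$ with $\delta$-independent mass. The hard part is precisely the length $|I_\delta|\sim1/\delta$: integrating the inequality $|D_\delta''|\le C\exp(-S_0)|D_\delta|+|h_\delta|$ twice produces a kernel $(y-t)$ which, against a forcing of total mass $C\delta$, only yields $O(\delta\,y_\delta^*)=O(1)$ near the ends of $I_\delta$ rather than the claimed $O(\delta)$. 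To reach the sharp estimate I would avoid absolute values and work with the signed energy defect $\mathcal E_\delta(y)=\int_0^y S_\delta'h_\delta$, attempting to show that the weak $y$-dependence of $a_\delta^\mu\Psi(1/a_\delta-S_\delta)$ enters the balance only where $\exp(-S_\delta)$ is non-negligible and there produces a cancellation, so that the terminal slope satisfies $S_\delta'(y_\delta^*)=S_0'(y_\delta^*)+o(\delta/y_\delta^*)$. Feeding this boundary information back, the integrability of $2\,\mathrm{sech}^2$ then propagates $|D_\delta|,|D_\delta'|\le C\delta$ across all of $I_\delta$. Once these are in hand, \eqref{Prop:ConvTipScal.Eqn1} follows for the second derivative from $|S_\delta''-S_0''|\le2|D_\delta|+|h_\delta|\le C\delta$, and the two bounds in \eqref{Prop:ConvTipScal.Eqn2} follow by integrating the decaying difference $|S_\delta'-S_0'|$ and evaluating at $y_\delta^*$. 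I expect the control of this cancellation in the energy defect, and the passage from it to pointwise closeness on the long interval, to be the central difficulty.
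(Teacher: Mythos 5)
Your reduction to the perturbed autonomous ODE, the pointwise bounds on the two error terms, and the a priori bounds via the first integral all coincide (up to minor rearrangement) with the paper's steps \eqref{Prop:ConvTipScal.PEqn3}--\eqref{Prop:ConvTipScal.PEqn5} and \eqref{Prop:ConvTipScal.PEqn11}, and your diagnosis of the obstacle is exactly right: double integration against the kernel $(y-t)$ loses a factor $y_\delta^*\sim\delta^{-1}$. But the proposal does not close this gap -- the cancellation giving $S_\delta^\prime\at{y_\delta^*}=S_0^\prime\at{y_\delta^*}+\Do{\delta/y_\delta^*}$ is announced (``attempting to show''), never established, and every subsequent conclusion rests on it, so the proof is incomplete precisely at its central step. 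Note also that the paper does \emph{not} obtain the result this way: it never integrates twice, but works with $d_\delta=\babs{S_\delta-S_0}+\babs{S_\delta^\prime-S_0^\prime}$, asserts the Gronwall-type inequality \eqref{Prop:ConvTipScal.PEqn8} in which $d_\delta$ is multiplied only by the coefficient $e_\delta\at{y}\leq C\exp\at{-cy}$ from \eqref{Prop:ConvTipScal.PEqn14}, and then applies the scalar comparison principle, so the Gronwall factor is $\exp\bat{C\int_0^\infty \exp\at{-cy}\dint{y}}\leq C$ and the length of $I_\delta$ never enters (see \eqref{Prop:ConvTipScal.PEqn15}); the weighted statements \eqref{Prop:ConvTipScal.Eqn2} are afterwards derived from $G_\delta\at{y}=yS_\delta^\prime\at{y}-S_\delta\at{y}$, $G_\delta^\prime=yS_\delta^{\prime\prime}$, together with \eqref{Prop:ConvTipScal.PEqn7}+\eqref{Prop:ConvTipScal.PEqn12}. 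In other words, in the paper the terminal-slope estimate \eqref{Prop:ConvTipScal.Eqn2}$_1$ is an \emph{output} of the pointwise bound, whereas in your scheme it is the \emph{input}.

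There is, moreover, a concrete reason why your route cannot be completed from Assumption \ref{Ass:Pot} alone: the terminal slope is pinned down exactly by the first integral, and the cancellation you need is not implied by the stated hypotheses. Since the right-hand side of \eqref{Prop:ConvTipScal.PEqn3} equals $2a_\delta^\mu\exp\at{-\tfrac{1}{a_\delta}}\Phi^\prime\bat{\tfrac1{a_\delta}-S_\delta\at{y}}+f_\delta\at{y}$, i.e.\ an autonomous function of $S_\delta$ up to $f_\delta$, multiplying by $S_\delta^\prime\geq0$ and integrating yields the exact identity
\begin{align*}
\tfrac12 \,S_\delta^\prime\at{y_\delta^*}^2
\;=\;2a_\delta^{\mu}\exp\at{-\tfrac{1}{a_\delta}}
\Bat{\Phi\at{\tfrac{1}{a_\delta}}-\Phi\Bat{\tfrac{R_\delta\at{\tfrac12}}{\delta}}}
\;+\;\int\limits_0^{y_\delta^*} f_\delta\at{y}\,S_\delta^\prime\at{y}\dint{y}\,,
\end{align*}
where the last two terms are $\DO{\exp\at{-c/\delta}}$ because $R_\delta\at{0}-R_\delta\at{\tfrac12}\to\tfrac12$. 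Combined with $S_0^\prime\at{y_\delta^*}=2+\DO{\exp\at{-c/\sqrt{\delta}}}$ (via \eqref{Eqn:UpperBoundPrmB}) this gives $S_\delta^\prime\at{y_\delta^*}-S_0^\prime\at{y_\delta^*}=\rho_\delta+\DO{\delta^2}$ with $\rho_\delta:=a_\delta^{\mu}\exp\at{-1/a_\delta}\Phi\at{1/a_\delta}-1$. Assumption \ref{Ass:Pot} controls only $\abs{\rho_\delta}\leq C\delta$: for $\Psi\at{r}=r^\mu$ one has $\Phi\at{r}=r^\mu\exp\at{r}\bat{1-\mu/r+\DO{r^{-2}}}$ and hence $\rho_\delta=-\mu a_\delta+\DO{\delta^2}$, so the required smallness $\Do{\delta/y_\delta^*}$ (which is at most $\DO{\delta^{3/2}}$ by \eqref{Eqn:UpperBoundPrmB}) is simply false when $\mu\neq0$, and the resulting secular term $\rho_\delta\, y$ is of order one at $y=y_\delta^*$. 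The cancellation is thus an additional structural property of the potential -- equivalent to $\Phi\at{r}=r^\mu \exp\at{r}\bat{1+\DO{r^{-2}}}$ -- which holds for the Toda chain and for the paper's numerical example $\Phi^\prime\at{r}=\at{r^2+2r}\exp\at{r}$ (where $\Phi\at{r}=r^2\exp\at{r}$ exactly), but not for, say, $\Phi^\prime\at{r}=r^\mu\exp\at{r}$ with $\mu\neq 0$. Your obstacle analysis has therefore uncovered something real: the same secular mechanism is what inequality \eqref{Prop:ConvTipScal.PEqn8} implicitly excludes, since the standard derivation of that inequality produces the additional term $\int_0^y\babs{S_\delta^\prime-S_0^\prime}$ with coefficient one -- your kernel problem in different clothes -- so the step you could not close is also the point where the paper's own argument, and indeed the validity of the estimates \eqref{Prop:ConvTipScal.Eqn1}--\eqref{Prop:ConvTipScal.Eqn2} in the full generality of Assumption \ref{Ass:Pot}, requires further justification.
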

\begin{proof} 
\ul{\emph{ Preliminaries}}: Since $S_\delta$ is even due to $R_\delta\in\calC$ and \eqref{Eqn:DefTipScaling}, it is sufficient to consider values $y$ with 
\begin{align}
\label{Prop:ConvTipScal.PEqn21}
0\leq y\leq y_\delta^*\,.
\end{align}
The convergence in \eqref{Prop:Existence.Eqn1} and the unimodality of $R_\delta$ 
ensure
\begin{align}
\label{Prop:ConvTipScal.PEqn2}
\tfrac14\leq R_\delta\at{\pm\tfrac12}\leq\tfrac34 \,,\qquad 
\tfrac34\leq R_\delta\at{0}\leq\tfrac54 
\end{align}
as well as
\begin{align}
\label{Prop:ConvTipScal.PEqn1}
R_\delta\at{b_\delta y\pm 1}\leq R_\delta\at{\pm\tfrac12} \leq 
R_\delta\at{b_\delta y}\leq R_\delta\at{0}
\end{align}
for all $y$ under consideration. In particular, $S_\delta$ is nonnegative on $I_\delta$.
\par
\ul{\emph{Perturbed ODE for $S_\delta$}}: Combining \eqref{Eqn:DefTipScaling} with the second-order advance-delay differential equation \eqref{Eqn.SOADDE} we obtain
\begin{align}
\label{Prop:ConvTipScal.PEqn9}
\frac{\delta^2 \la_\delta^2}{b_\delta^2} S_\delta^{\prime\prime}\at{y}=
2\Phi^\prime\at{\frac{R_\delta\bat{b_\delta{y}}}{\delta}}
-\Phi^\prime\at{\frac{R_\delta\bat{b_\delta{y}-1}}{\delta}}-
\Phi^\prime\at{\frac{R_\delta\bat{b_\delta{y}+1}}{\delta}}
\end{align}
and infer from \eqref{Prop:ConvTipScal.PEqn1} that $S_\delta$ is convex on $I_\delta$ ($S_0$ is convex on $\Rset$). \BMHC Due to \eqref{Eqn:DefTipScaling} and since \eqref{Eqn:DefPrmsAandB} implies \EMHC
\begin{align}
\label{Prop:ConvTipScal.PEqn10}
\frac{b_\delta^2}{\delta^{2}\la_\delta^2}=a_\delta^{\mu}\exp\at{-\frac{1}{a_\delta}}
\,,
\end{align}
\BMHC equation \eqref{Prop:ConvTipScal.PEqn9} can be written as \EMHC
\begin{align}
\label{Prop:ConvTipScal.PEqn3}
S_\delta^{\prime\prime}\at{y}=2\frac{b_\delta^2}{\delta^2\la_\delta^2}\Phi^\prime\at{\frac{R_\delta\at{b_\delta{y}}}{\delta}}+f_\delta\at{y}=2\exp\Bat{-S_\delta\at{y}}+f_\delta\at{y} +g_\delta\at{y}\,,
\end{align}
\BMHC where the error terms are given by \EMHC
\begin{align*}
f_\delta\at{y}:=-
\frac{b_\delta^2}{\delta^2\la_\delta^2}\at{\Phi^\prime\at{\frac{R_\delta\at{b_\delta y-1}}{\delta}}+\Phi^\prime\at{\frac{R_\delta\at{b_\delta y+1}}{\delta}}}
\end{align*}
and
\begin{align}
\label{Prop:ConvTipScal.PEqn18}
g_\delta\at{y}:=2\exp\Bat{-S_\delta\at{y}}\at{a_\delta^\mu\Psi\at{\frac{R_\delta\at{b_\delta y}}{\delta}}-1}\,.
\end{align}
\ul{\emph{Pointwise estimates for $f_\delta$ and $g_\delta$}}: The first error term can -- due to \eqref{Eqn:LimitPrmA}, \eqref{Prop:ConvTipScal.PEqn2}, \eqref{Prop:ConvTipScal.PEqn1}, and  \eqref{Prop:ConvTipScal.PEqn10} -- be bounded by
\begin{align}
\label{Prop:ConvTipScal.PEqn5}
\begin{split}
\sup_{y\in I_\delta}\babs{f_\delta\at{y}}&= a_\delta^\mu \exp\at{-\frac{1}{a_\delta}}\at{\Phi^\prime\at{\frac{R_\delta\at{b_\delta y-1}}{\delta}}+\Phi^\prime\at{\frac{R_\delta\at{b_\delta y+1}}{\delta}}}\\&
\leq C\delta^{\mu} \exp\at{-\frac{1}{a_\delta}} \Phi^\prime\at{\frac{3}{4 \delta}} \leq C\exp\at{-\frac{c}{\delta}}\,,
\end{split}
\end{align}
where we also used the monotonicity and the asymptotic behavior of $\Phi^\prime$. To estimate the second error term, we observe that
\begin{align*}
a_\delta^\mu\Psi\at{\frac{R_\delta\at{b_\delta y}}{\delta}}-1=
\at{a_\delta^\mu\at{\frac{R_\delta\at{b_\delta y}}{\delta}}^\mu-1}+
a_\delta^\mu\at{\frac{R_\delta\at{b_\delta y}}{\delta}}^\mu\at{\frac{
\Psi\at{\displaystyle\frac{R_\delta\at{b_\delta y}}{\delta}}}{\at{\displaystyle\frac{R_\delta\at{b_\delta y}}{\delta}}^\mu}-1}
\,
\end{align*}
and with \eqref{Eqn:DefTipScaling} we conclude
\begin{align*}
\at{a_\delta^\mu\at{\frac{R_\delta\at{b_\delta y}}{\delta}}^\mu-1}=
\Bat{1-a_\delta S_\delta\at{y}}^\mu-1\,.
\end{align*}
Moreover, \eqref{Eqn:LimitPrmA} combined with \eqref{Eqn:DefTipScaling} and \eqref{Prop:ConvTipScal.PEqn1} guarantees
\begin{align*}
c \leq 1-a_\delta S_\delta\at{y}=\frac{a_\delta}{\delta}R_\delta\at{b_\delta y}\leq C\,,
\end{align*}
so \BMHC a simple Taylor argument provides \EMHC
\begin{align*}
\abs{a_\delta^\mu\at{\frac{R_\delta\at{b_\delta y}}{\delta}}^\mu-1}\leq Ca_\delta S_\delta\at{y}\leq C\delta S_\delta\at{y}\,.
\end{align*}
On the other hand, thanks to Assumption \ref{Ass:Pot}, \eqref{Eqn:LimitPrmA}, and \eqref{Prop:ConvTipScal.PEqn2}+\eqref{Prop:ConvTipScal.PEqn1} we  estimate
\begin{align*}
a_\delta^\mu\at{\frac{R_\delta\at{b_\delta y}}{\delta}}^\mu\abs{\frac{
\Psi\at{\displaystyle\frac{R_\delta\at{b_\delta y}}{\delta}}}{\at{\displaystyle\frac{R_\delta\at{b_\delta y}}{\delta}}^\mu}-1}\leq Ca_\delta^\mu\at{\frac{ R_\delta\at{b_\delta y}}{\delta}}^{\mu-1}\leq C\delta\,,
\end{align*}
and summing the last two results we deduce from \eqref{Prop:ConvTipScal.PEqn18} that
\begin{align}
\label{Prop:ConvTipScal.PEqn4}
\sup_{y\in I_\delta}\babs{g_\delta\at{y}} \leq C \delta \exp\Bat{-S_\delta\at{y}}\Bat{1+S_\delta\at{y}}\leq 
C \delta \exp\Bat{-cS_\delta\at{y}}\,.
\end{align}
\ul{\emph{Upper bound for $b_\delta$}}: Employing \BMHC \eqref{Eqn:LimitPrmA} as well as \EMHC the properties of $\Phi^\prime$, $R_\delta$, and $\calA$ we show that
\begin{align*}
\frac{1}{\delta}\at{\calA \Phi^\prime\at{\frac{R_\delta\BMHC\at{x}\EMHC}{\delta}}}\at{x}\leq \frac{1}{\delta}\Phi^\prime\at{\frac{1}{a_\delta}}\leq \frac{C}{\delta^{\mu+1}}\exp\at{\frac{1}{a_\delta}}\quad \text{for all}\quad x\,,
\end{align*}
and testing the traveling wave equation \eqref{Eqn:TWInt} with $V_0$ gives
\begin{align*}
c\la_\delta^2\leq \la_\delta^2 \int\limits_{-1/2}^{1/2} V_\delta\at{x}V_0\at{x}\dint{x}\leq 
\frac{C}{\delta^{\mu+1}}\exp\at{\frac{1}{a_\delta}}
\end{align*} 
thanks to $V_\delta\to V_0$ and $\int\limits_{-1/2}^{+1/2}V_0\at{x}^2\dint{x}=1$, see Proposition \ref{Prop:Existence}. The estimate
\begin{align}
\label{Eqn:UpperBoundPrmB}
b_\delta\leq C\sqrt{\delta}
\end{align}
is therefore granted by \eqref{Eqn:DefPrmsAandB} and \eqref{Eqn:LimitPrmA}. In particular, the interval $I_\delta$ is  large and we have
\begin{align}
\label{Prop:ConvTipScal.PEqn20}
y_\delta^*>1\,.
\end{align}
\ul{\emph{Local approximation estimates}}: We define
\begin{align*}
d_\delta\at{y}:=\babs{S_\delta\at{y}-S_0\at{y}}+\babs{S_\delta^\prime\at{y}-S_0^\prime\at{y}}
\end{align*} 
and recall that
\begin{align}
\label{Prop:ConvTipScal.PEqn16}
\babs{\exp\bat{-S_\delta\at{y}}-\exp\bat{-S_0\at{y}}}\leq e_\delta\at{z}
\babs{S_\delta\at{y}-S_0\at{y}}\,,
\end{align} 
where
\begin{align*}
e_{\delta}\at{z}:=\exp\Bat{-\min{\big\{S_\delta\at{z},\, S_0\at{z}\big\}}}\,.
\end{align*} 
Moreover, by standard arguments we derive from the ODE initial-value problems \eqref{Eqn:LimitODEforS} and \eqref{Eqn:IVforS}+\eqref{Prop:ConvTipScal.PEqn3} the Gronwall-type inequality
\begin{align}
\label{Prop:ConvTipScal.PEqn8}
d_\delta\at{y}\leq C\int\limits_0^y \Bat{ e_{\delta}\at{\tilde{y}} d_\delta\at{\tilde{y}}+f_\delta\at{\tilde{y}}+ g_\delta\at{\tilde{y}}}\dint{\tilde{y}}\,,
\end{align}
which controls the growth of $d_\delta$. More precisely, for $0\leq y\leq 1$
and using \eqref{Prop:ConvTipScal.PEqn5}, \eqref{Prop:ConvTipScal.PEqn4}, and \eqref{Prop:ConvTipScal.PEqn20} we estimate
\begin{align*}
e_{\delta}\at{y}\leq 1\,,\qquad f_\delta\at{y}\leq  C\delta\,,\qquad g_\delta\at{y}\leq C\delta\,,
\end{align*}
so \eqref{Prop:ConvTipScal.PEqn8} implies
\begin{align*}
d_\delta\at{y}\leq C\int\limits_0^y d_\delta\at{\tilde{y}}\dint{\tilde{y}}+C\delta=:D_{\mathrm{loc},\,\delta}\at{y}
\end{align*}
and hence 
\begin{align*}
D_{\mathrm{loc},\,\delta}^\prime\at{y}=C d_\delta\at{y}\leq CD_{\mathrm{loc},\,\delta}\at{y}\,,\qquad
D_{\mathrm{loc},\,\delta}\at{0}\leq C\delta
\end{align*}
for all $0\leq y\leq 1$. The comparison principle for scalar ODE thus \BMHC ensures \EMHC 
\begin{align*}
\sup_{0\leq y\leq 1}d_\delta\at{y}\leq\sup_{0\leq y\leq 1}D_{\mathrm{loc},\,\delta}\at{y}\leq  C \delta\,,
\end{align*}
and this gives
\begin{align}
\label{Prop:ConvTipScal.PEqn6}
\tfrac32 S_0\at{1}\geq S_\delta\at{1}\geq \tfrac12 S_0\at{1}>0\,,\qquad  S_\delta^\prime\at{1}\geq \tfrac12 S_0^\prime\at{1}>0\,.
\end{align}
\ul{\emph{Lower bound for $b_\delta$}}: Combining \eqref{Prop:ConvTipScal.PEqn6} with the convexity of $S_\delta$ on $I_\delta$ we prove
\begin{align}
\label{Prop:ConvTipScal.PEqn11}
S_\delta\at{y}\geq \max\big\{0, S_\delta\at{1}+S_\delta^\prime\at{1}\at{y-1}\big\}\geq
\max\big\{0, cy-C\big\}
\end{align}
for all $y$ with \eqref{Prop:ConvTipScal.PEqn21}, and evaluating \eqref{Eqn:DefTipScaling} for $y=y_\delta^*$ we establish -- using also \eqref{Prop:ConvTipScal.PEqn2}+\eqref{Prop:ConvTipScal.PEqn20} -- the inequalities
\begin{align*}
\frac{c}{b_\delta}\leq S_\delta\at{y_\delta^*}=\frac{R_\delta\at{0}-R_\delta\at{\tfrac12}}{\delta}\leq
\frac{C}{\delta}\,.
\end{align*}
Rearranging terms we obtain
\begin{align}
\label{Eqn:LowerBoundPrmB}
b_\delta \geq c \delta
\end{align} 
and hence
\begin{align}
\label{Prop:ConvTipScal.PEqn7}
\int\limits_0^y \at{1+\tilde{y}}\babs{ f_\delta\at{\tilde{y}}}\dint{\tilde{y}}\leq
C\exp\at{-\frac{c}{\delta}}\int\limits_{0}^{\BMHC y^*_\delta\EMHC }\at{1+\tilde{y}}\dint{\tilde{y}}=
C\exp\at{-\frac{c}{\delta}}\at{\frac{1}{b_\delta}+\frac{1}{b_\delta^2}}\leq C\delta
\end{align}
thanks to the upper bound for $f_\delta$ from \eqref{Prop:ConvTipScal.PEqn5}. Moreover, in view of \eqref{Prop:ConvTipScal.PEqn4}, the lower bound \eqref{Prop:ConvTipScal.PEqn11} also implies
\begin{align}
\label{Prop:ConvTipScal.PEqn12}
\int\limits_0^y \at{1+\tilde{y}}\babs{g_\delta\at{\tilde{y}}}\dint{\tilde{y}}\leq C\delta
\int\limits_0^\infty\at{1+\tilde{y}}\exp\bat{-c\tilde{y}}\dint{\tilde{y}}\leq C\delta\,.
\end{align}
\ul{\emph{Pointwise estimates on the interval $I_\delta$}}: %
By \eqref{Prop:ConvTipScal.PEqn11} -- which also holds for $S_0$ -- we have
\begin{align}
\label{Prop:ConvTipScal.PEqn14}
e_{\delta}\at{y}\leq C\exp\at{-c y}
\end{align}
and combining this with \eqref{Prop:ConvTipScal.PEqn7}+\eqref{Prop:ConvTipScal.PEqn12}, we can simplify the Gronwall estimate \eqref{Prop:ConvTipScal.PEqn8} to
\begin{align*}
d_\delta\at{y}\leq C\int\limits_0^y \exp\at{-c\tilde{y}} d_\delta\at{\tilde{y}}\dint{\tilde{y}}+ C \delta=:D_{\mathrm{glob},\,\delta}\at{y}\,.
\end{align*}
By direct computations we verify
\begin{align*}
D_{\mathrm{glob},\,\delta}^\prime\at{y}= C\exp\at{-cy}d_\delta\at{y}\leq C\exp\at{-cy}D_{\mathrm{glob},\,\delta}\at{y}\,,\qquad D_{\mathrm{glob},\,\delta}\at{0}\leq C\delta
\end{align*}
and infer from the comparison principle for scalar ODEs the global error estimate
\begin{align}
\label{Prop:ConvTipScal.PEqn15}
\sup_{0\leq y\leq y_\delta^*} d_\delta\at{y}\leq \sup_{0\leq y\leq y_\delta^*} D_{\mathrm{glob},\,\delta}\at{y}\leq C\delta\exp\at{C\int\limits_0^\infty \exp\at{-c\tilde{y}}\dint{\tilde{y}}}\leq C \delta\,,
\end{align}
\BMHC which yields \EMHC the desired pointwise estimates for $S_\delta-S_0$ and $S_\delta^\prime-S_0^\prime$ \BMHC on the interval $I_\delta$. \EMHC The corresponding thesis for the second derivatives is a direct consequence of the estimates derived so far and the ODEs  \eqref{Eqn:LimitODEforS}+\eqref{Prop:ConvTipScal.PEqn3}. In particular, we have shown \eqref{Prop:ConvTipScal.Eqn1}.
\par
\ul{\emph{Further estimates on $I_\delta$}}: %
We define 
\begin{align*}
G_\delta\at{y}:= yS_\delta^\prime\at{y}-S_\delta\at{y}
\end{align*}
and employ \eqref{Eqn:LimitODEforS}, \eqref{Prop:ConvTipScal.PEqn3}, and \eqref{Prop:ConvTipScal.PEqn16} to \BMHC obtain \EMHC 
\begin{align*}
\int\limits_0^{y_\delta^*} \babs{G_\delta^\prime\at{y}-G_0^\prime\at{y}}\dint{y}&=
\int\limits_0^{y_\delta^*}\babs{yS_\delta^{\prime\prime}\at{y}-yS_0^{\prime\prime}\at{y}}\dint{y}\\& \leq\int\limits_0^{y_\delta^*} y\Bat{e_\delta\at{y} d_\delta\at{y}+\babs{f_\delta\at{y}}+\babs{g_\delta\at{y}}}\dint{y}\,.
\end{align*}
Inserting 
\eqref{Prop:ConvTipScal.PEqn7}, \eqref{Prop:ConvTipScal.PEqn12}, \eqref{Prop:ConvTipScal.PEqn14}, and \eqref{Prop:ConvTipScal.PEqn15} gives
\begin{align}
\label{Prop:ConvTipScal.PEqn17}
\int\limits_0^{y_\delta^*}\babs{G_\delta^\prime\at{y}-G_0^\prime\at{y}}\dint{y}\leq C \delta\,,
\end{align}
and in view of $G_\delta\at{0}=G_0\at{0}$ we arrive at
\begin{align*}
\sup\limits_{0\leq y\leq y_\delta^*} \babs{G_\delta\at{y}-G_0\at{y}}\leq C\delta\,,
\end{align*}
which implies in combination with \eqref{Prop:ConvTipScal.Eqn1} the first claim in \eqref{Prop:ConvTipScal.Eqn2}. Moreover, integrating the identity
\begin{align*}
S_\delta^\prime\at{y}-S_0^\prime\at{y}=S_\delta^\prime\at{y_\delta^*}-S_0^\prime\at{y_\delta^*}-\int\limits_{y}^{y_\delta^*}
S_\delta^{\prime\prime}\at{\tilde{y}}-S_0^{\prime\prime}\at{\tilde{y}}
\dint{\tilde{y}}
\end{align*}%
we obtain
\begin{align*}
\int\limits_{0}^{y_\delta^*}\babs{S_\delta^\prime\at{y}-S_0^\prime\at{y}}\dint{y}
&\leq y_\delta^*\abs{S_\delta^\prime\at{y_\delta^*}-S_0^\prime\at{y_\delta^*}}+
\int\limits_{0}^{y_\delta^*}
\babs{yS_\delta^{\prime\prime}\at{y}-yS_0^{\prime\prime}\at{y}}
\dint{y}\\
&= \abs{y_\delta^*S_\delta^\prime\at{y_\delta^*}-y_\delta^*S_0^\prime\at{y_\delta^*}}+
\int\limits_{0}^{y_\delta^*}
\babs{G_\delta^{\prime}\at{y}-G_0^{\prime}\at{y}}
\dint{y}\,,
\end{align*}
so \eqref{Prop:ConvTipScal.Eqn2}$_2$ follows from \eqref{Prop:ConvTipScal.Eqn2}$_1$ and
\eqref{Prop:ConvTipScal.PEqn17}.
\end{proof}
\begin{figure}[t] %
\centering{ %
\includegraphics[width=0.95\textwidth]{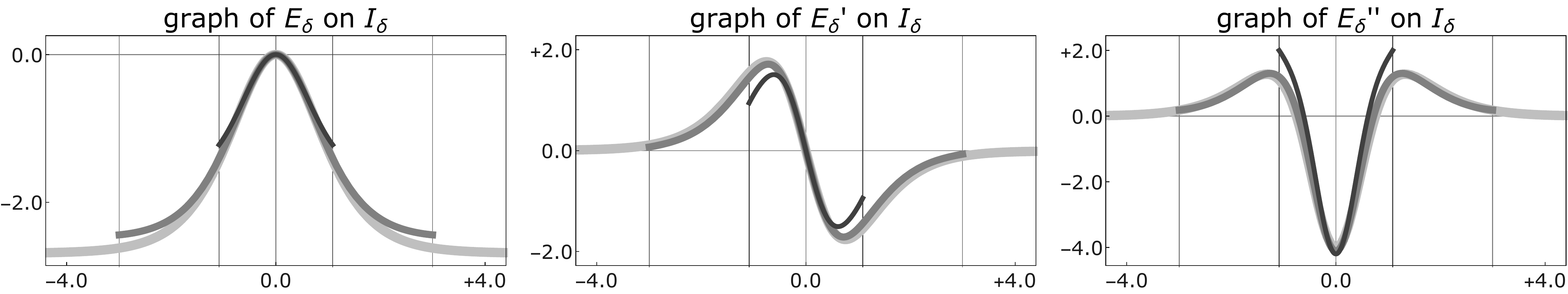} %
} %
\caption{ %
Numerical values of the scaled approximation error $E_\delta\at{y}:=\delta^{-1}\at{S_\delta\at{y}-S_0\at{y}}$ on the interval $I_\delta$ for $\delta=0.27$ (black), $\delta=0.09$ (dark gray), and $\delta=0.03$ (light gray). The three panels correspond to different derivatives and provide numerical evidence for the optimality of the pointwise estimates in \eqref{Prop:ConvTipScal.Eqn1}.
} %
\label{Fig:Errors}
\end{figure} %
The proof of Proposition \ref{Prop:ConvTipScal} reveals that the estimates \eqref{Prop:ConvTipScal.Eqn1} and \eqref{Prop:ConvTipScal.Eqn2} can -- at least in principle -- be improved as follows. Assuming an asymptotic expansion
\begin{align}
\label{Eqn:AsympExp}
S_\delta = S_0+\delta S_1  +...\,,\qquad \BMHC a_\delta = \delta + ...\,,\EMHC
\end{align}
the identities \eqref{Eqn:LimitODEforS}+\eqref{Eqn:DefTipScaling}+\eqref{Prop:ConvTipScal.PEqn3} provide a linear but inhomogeneous and non-autonomous ODE for the coefficient function $S_1$, which \BMHC here is \EMHC not related to $\delta=1$ but denotes $\tfrac{\dint}{\dint \delta}  S_\delta|_{\delta=0}$.  For instance, in the case $\Phi^\prime\at{r}= r^\mu\exp\at{r}$ we \BMHC find \EMHC
\begin{align*}
S_\delta^{\prime\prime}\at{y}= 2
\bat{1+a_\delta S_\delta\at{y}}^\mu\exp\bat{-S_\delta\at{y}}\;+\;f_\delta\at{y}
\end{align*}
and obtain
\begin{align*}
S_1^{\prime\prime}\at{y}= 2\BMHC\Bat{ \mu S_0\at{y}-S_1\at{y}}\EMHC\exp\bat{-S_0\at{y}}\,,\qquad 
S_1\at{0}=S_1^{\prime}\at{0}=0
\end{align*}
as equation for the next-to-leading-order term \BMHC thanks to \eqref{Prop:ConvTipScal.PEqn5}+\eqref{Eqn:AsympExp}. \EMHC Numerical approximations of $S_1$ can be found in Figure \ref{Fig:Errors} for the data from Figure \ref{Fig:Profiles}.
\par
We finally mention that the convergence under the tip scaling is considerably better for the Toda chain \eqref{Eqn:TodaPot}. In fact, $g_\delta\at{y}$ from \eqref{Prop:ConvTipScal.PEqn18} is for any $y\in I_\delta$ much smaller than $\DO{\delta}$ and thus we show that the error bounds in \eqref{Prop:ConvTipScal.Eqn1} and \eqref{Prop:ConvTipScal.Eqn2} are exponentially small in $\delta$.
%
%

\subsection{Transition and foot scaling}\label{sect:TranFootScal}
%
We next characterize the transition layer of the velocity profile $V_\delta$ near $x=-\tfrac12$ as well as the behavior of the distance profile $R_\delta$ near $x=-1$ (`foot of the tent') by studying the functions
\begin{align}
\label{Eqn:DefTranScaling}
W_\delta\at{y}:=\frac{b_\delta}{\delta} V_\delta\bat{-\tfrac12+b_\delta y}
\end{align}
and
\begin{align}
\label{Eqn:DefFootScaling}
T_\delta\at{y}:=\frac{1}{\delta}R_\delta\at{-1+b_\ga y}\,,
\end{align}
respectively. In particular, we employ our results on the tip scaling and show 
that $W_\delta$ and $T_\delta$ behave on the interval $I_\delta$ like
\begin{align}
\label{Eqn:LimW}
W_0\at{y}:=\tfrac12 S_0^\prime\at{y}+1=1+\tanh\at{y}
\end{align}
and
\begin{align}
\label{Eqn:limT}
T_0\at{y}:=\tfrac12 S_0\at{y}+y+\ln{2}=\ln\bat{2\cosh\at{y}}+y\,.
\end{align}
As illustrated in Figure \ref{Fig:Scaling}, $W_\delta|_{I_\delta}$ describes $V_\delta$ on the domain $-1\leq x\leq0$, while $T_\delta|_{I_\delta}$ corresponds to $R_\delta$ restricted to $-\tfrac32\leq x\leq-\tfrac12$.
\begin{proposition}[convergence under the transition and the tip scaling]
\label{Prop:ConvTranScal}
There exists a constant $C$ such that
\begin{align}
\label{Prop:ConvTranScal.Eqn1}
\sup_{y\in I_\delta}\; \babs{W_\delta\at{y}-W_0\at{y}}+
\babs{W_\delta^{\prime}\at{y}-W_0^{\prime}\at{y}}\;\leq\; C \delta\,,\qquad
\int\limits_{I_\delta}\babs{W_\delta\at{y}-W_0\at{y}}\dint{y}\;\leq\; C\delta
\end{align}
and 
\begin{align}
\label{Prop:ConvTranScal.Eqn2}
\sup_{y\in I_\delta}\; \babs{T_\delta\at{y}-T_0\at{y}}+
\babs{T_\delta^{\prime}\at{y}-T_0^{\prime}\at{y}}+
\babs{T_\delta^{\prime\prime}\at{y}-T_0^{\prime\prime}\at{y}}\;\leq\; C \delta
\end{align}
hold for all sufficiently small $\delta>0$.
\end{proposition}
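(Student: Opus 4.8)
The strategy is to transfer the tip-scaling estimates of Proposition~\ref{Prop:ConvTipScal} to the two remaining scalings by differentiating the travelling-wave equations \eqref{Eqn:ScaledTW} and \eqref{Eqn.SOADDE} at the shifted points $x=-\tfrac12+b_\delta y$ and $x=-1+b_\delta y$, and then integrating the resulting relations using boundary data at the outer end $y=-y_\delta^*$ of $I_\delta$. I first record the exact algebraic links. Substituting $x=-\tfrac12+b_\delta y$ into the first equation of \eqref{Eqn:ScaledTW} and exploiting that $V_\delta$ is even gives the odd-part identity $W_\delta\at{y}-W_\delta\at{-y}=S_\delta^\prime\at{y}$, whereas $x=-1+b_\delta y$ yields $T_\delta^\prime\at{y}=W_\delta\at{y}-\tfrac{b_\delta}{\delta}V_\delta\at{-\tfrac32+b_\delta y}$. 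Differentiating the velocity equation and inserting \eqref{Eqn.SOADDE} at $x=-1+b_\delta y$ produces the two perturbed ODEs
\[
W_\delta^\prime\at{y}=\exp\bat{-S_\delta\at{y}}+\tfrac12 g_\delta\at{y}+r_\delta\at{y}\,,\qquad
T_\delta^{\prime\prime}\at{y}=\exp\bat{-S_\delta\at{y}}+\tfrac12 g_\delta\at{y}+\tilde r_\delta\at{y}\,,
\]
where $g_\delta$ is as in \eqref{Prop:ConvTipScal.PEqn18} and the remainders $r_\delta,\tilde r_\delta$ consist of terms $\tfrac{b_\delta^2}{\delta^2\la_\delta^2}\Phi^\prime$ evaluated on $R_\delta$ near $x\in\{-2,-1\}$ and on $T_\delta$ itself. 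The crucial input is the tip-scaling identity $\tfrac{b_\delta^2}{\delta^2\la_\delta^2}\Phi^\prime\bat{R_\delta\at{b_\delta y}/\delta}=\exp\bat{-S_\delta}+\tfrac12 g_\delta$ contained in \eqref{Prop:ConvTipScal.PEqn3}+\eqref{Prop:ConvTipScal.PEqn18}. Since \eqref{Eqn:LimW} and \eqref{Eqn:limT} give $W_0^\prime=\exp\bat{-S_0}=T_0^{\prime\prime}$, both derivative differences are already governed by Proposition~\ref{Prop:ConvTipScal}.

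Next I would establish the routine estimates. The remainders $r_\delta,\tilde r_\delta$ are exponentially small: because $R_\delta$ is unimodal with $R_\delta\at{\pm\tfrac12}\leq\tfrac34<R_\delta\at{0}$ by \eqref{Prop:ConvTipScal.PEqn2}, the monotonicity and growth of $\Phi^\prime$ together with \eqref{Eqn:DefPrmsAandB} bound every such term by $C\exp\at{-c/\delta}$, exactly as for $f_\delta$ in \eqref{Prop:ConvTipScal.PEqn5}. Combining this with the pointwise tip bounds \eqref{Prop:ConvTipScal.Eqn1}, the exponential decay of $e_\delta$ and $g_\delta$, and the convexity lower bound on $S_\delta$, I obtain $\babs{W_\delta^\prime-W_0^\prime}+\babs{T_\delta^{\prime\prime}-T_0^{\prime\prime}}\leq C\delta\exp\at{-c\abs{y}}+C\exp\at{-c/\delta}$ on $I_\delta$, which yields at once the claimed sup bounds for $W_\delta^\prime-W_0^\prime$ and $T_\delta^{\prime\prime}-T_0^{\prime\prime}$ and the integral bounds $\int_{I_\delta}\babs{W_\delta^\prime-W_0^\prime}\dint{y}\leq C\delta$, $\int_{I_\delta}\babs{T_\delta^{\prime\prime}-T_0^{\prime\prime}}\dint{y}\leq C\delta$. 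In parallel I would check that the profiles are exponentially small at the outer edge, i.e.\ $W_\delta\at{-y_\delta^*}$, $T_\delta\at{-y_\delta^*}$ and $T_\delta^\prime\at{-y_\delta^*}$ are all $\DO{\exp\at{-c/\delta}}$, using that $V_\delta$ and $R_\delta$ are exponentially small away from the central bump (which follows from \eqref{Eqn:TWInt}, monotonicity of $\Phi^\prime$, and \eqref{Eqn:DefPrmsAandB}); the corresponding values of $W_0$ and $T_0$ are exponentially small as well.

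The heart of the matter is to fix the additive constants, that is the ``$+1$'' in $W_0$ and the constants in $T_0$. For the sup bound I would integrate the derivative estimate from the outer edge: since $W_\delta\at{-y_\delta^*}-W_0\at{-y_\delta^*}=\DO{\exp\at{-c/\delta}}$ and the total variation $\int_{I_\delta}\babs{W_\delta^\prime-W_0^\prime}\dint{y}\leq C\delta$, integration gives $\sup_{I_\delta}\babs{W_\delta-W_0}\leq C\delta$; the point is that the leading part of $W_\delta^\prime$ is $\exp\bat{-S_0}=\cosh^{-2}$, whose primitive from $-y_\delta^*$ reconstructs precisely $1+\tanh$ because $y_\delta^*\to\infty$. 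Integrating $T_\delta^{\prime\prime}-T_0^{\prime\prime}$ from $-y_\delta^*$ then controls $T_\delta^\prime-T_0^\prime$, and a further integration controls $T_\delta-T_0$, provided the $\fspaceL^1$-norm of $T_\delta^\prime-T_0^\prime$ on $I_\delta$ is of order $\delta$; via $T_\delta^\prime=W_\delta+\DO{\exp\at{-c/\delta}}$ this bound coincides with the integral bound for $W_\delta-W_0$ in \eqref{Prop:ConvTranScal.Eqn1}.

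The main obstacle is therefore this last integral bound $\int_{I_\delta}\babs{W_\delta-W_0}\dint{y}\leq C\delta$. A naive estimate only gives $\DO{1}$, because $W_\delta-W_0$ may build up a nearly constant offset $K:=\at{W_\delta-W_0}\at{y_\delta^*}$ across the long interval $I_\delta$, whose length is of order $1/\delta$; an offset of size $\delta$ would already integrate to $\DO{1}$. To overcome this I would show $K=\DO{\delta^2}$. Evaluating the odd-part identity at $y_\delta^*$ gives $W_\delta\at{y_\delta^*}=S_\delta^\prime\at{y_\delta^*}+W_\delta\at{-y_\delta^*}$, so that
\[
K=\bat{S_\delta^\prime\at{y_\delta^*}-S_0^\prime\at{y_\delta^*}}+\bat{\tfrac12 S_0^\prime\at{y_\delta^*}-1}+\DO{\exp\at{-c/\delta}}\,;
\]
the middle term is $\DO{\exp\at{-c/\delta}}$ since $\tfrac12 S_0^\prime\at{y_\delta^*}=\tanh\at{y_\delta^*}$, and the first term is $\DO{\delta/y_\delta^*}=\DO{\delta b_\delta}$ by the refined estimate $\babs{y_\delta^*S_\delta^\prime\at{y_\delta^*}-y_\delta^*S_0^\prime\at{y_\delta^*}}\leq C\delta$ from \eqref{Prop:ConvTipScal.Eqn2}. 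It thus remains to prove the two-sided bound $b_\delta\asymp\delta$: the lower bound $b_\delta\geq c\delta$ is \eqref{Eqn:LowerBoundPrmB}, and the matching upper bound $b_\delta\leq C\delta$ follows from the exact identity $S_\delta\at{y_\delta^*}=\tfrac{1}{a_\delta}-R_\delta\at{\tfrac12}/\delta$ together with $R_\delta\at{\tfrac12}\leq\tfrac34$, $a_\delta/\delta\to1$, and the tip bound $S_\delta\at{y_\delta^*}\leq S_0\at{y_\delta^*}+C\delta\leq 1/b_\delta+C\delta$. With $b_\delta\leq C\delta$ one obtains $K=\DO{\delta^2}$, hence $\int_{I_\delta}\babs{W_\delta-W_0}\dint{y}\leq \abs{K}\,y_\delta^*+C\delta\leq C\delta$, and the remaining bounds for $T_\delta$ close as described.
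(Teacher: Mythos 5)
Your proposal is correct, but it is organized differently from the paper's proof, so a comparison is worthwhile. The paper does not work at the level of perturbed ODEs for $W_\delta^\prime$ and $T_\delta^{\prime\prime}$; instead it integrates the traveling wave equation once and inserts \eqref{Eqn.SOADDE} to obtain the \emph{exact} identities
\begin{align*}
W_\delta\at{y}=\tfrac12 S_\delta^{\prime}\at{y}+\tfrac12 S_\delta^\prime\at{y_\delta^*}+ f_\delta\at{y}\,,\qquad
T_\delta\at{y}=\tfrac12S_\delta\at{y}+\tfrac12 S_\delta^\prime\at{y_\delta^*}y+\tfrac12\Bat{
y_\delta^*S_\delta^\prime\at{y_\delta^*}-S_\delta\at{y_\delta^*}}+g_\delta\at{y}+h_\delta\at{y}
\end{align*}
with exponentially small $f_\delta,g_\delta,h_\delta$, after which every claim, including the delicate $\fspaceL^1$ bound, follows in one stroke from Proposition \ref{Prop:ConvTipScal}: the dangerous constant offset appears explicitly as $\tfrac12\bat{S_\delta^\prime\at{y_\delta^*}-2}$, and its product with the interval length is exactly what the refined estimate \eqref{Prop:ConvTipScal.Eqn2}$_1$ controls. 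Your route reaches the same offset through the boundary value $K$ and the odd-part identity, which is equivalent (your $K$ equals the paper's offset up to exponentially small terms), but you then take a detour: bounding $K=\DO{\delta b_\delta}$ and multiplying by $y_\delta^*=1/(2b_\delta)$ forces you to prove the two-sided bound $b_\delta\asymp\delta$, whereas the product $\abs{K}\,y_\delta^*\leq y_\delta^*\babs{S_\delta^\prime\at{y_\delta^*}-S_0^\prime\at{y_\delta^*}}+\DO{\delta}\leq C\delta$ follows from \eqref{Prop:ConvTipScal.Eqn2}$_1$ directly, with only the crude bound \eqref{Eqn:UpperBoundPrmB} in hand — this is precisely how the paper avoids needing $b_\delta\leq C\delta$ at this stage (the sharp expansion $b_\delta=2\delta+\DO{\delta^2}$ is only derived afterwards, in Proposition \ref{Prop:ScalingRelations}, from the norm constraint). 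That said, your upper-bound argument $b_\delta\leq C\delta$ via $S_\delta\at{y_\delta^*}=\tfrac{1}{a_\delta}-R_\delta\at{\tfrac12}/\delta\geq c/\delta$ and $S_\delta\at{y_\delta^*}\leq S_0\at{y_\delta^*}+C\delta\leq 1/b_\delta+C\delta$ is correct and self-contained, so nothing breaks; it simply duplicates work the product estimate already does. One small point of care in your final step: the bound $\int_{I_\delta}\babs{W_\delta-W_0}\dint{y}\leq\abs{K}\,y_\delta^*+C\delta$ requires integrating the derivative estimate from the \emph{right} edge on $\ccinterval{0}{y_\delta^*}$ and from the \emph{left} edge on $\ccinterval{-y_\delta^*}{0}$, so that the decaying factor $\exp\at{-c\abs{y}}$ is available on each half; integrating from a single endpoint across the whole interval would leave an $\DO{\delta}$ constant over a length $\DO{1/\delta}$ and lose the estimate. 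Your pointwise bound $\babs{W_\delta^\prime-W_0^\prime}\leq C\delta\exp\at{-c\abs{y}}+C\exp\at{-c/\delta}$ makes this two-sided integration work, so the gap is only expository.
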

\begin{proof} 
\ul{\emph{Formulas for $W_\delta$ and $T_\delta$}}: Definition \eqref{Eqn:DefTranScaling} and the traveling wave equation \eqref{Eqn:TWInt} provide
\begin{align*}
W_\delta\at{y}=
\frac{b_\delta}{\delta^2\la_\delta^2}\int\limits_{b_\delta y-1}^{b_\delta y}\Phi^\prime\at{\frac{R_\delta\at{x}}{\delta}}\dint{x}\,,
\end{align*}
and after splitting off the dominant contributions to the integral (coming from $\abs{x}\leq\tfrac12$ ) and inserting the second-order advance-delay differential relation \eqref{Eqn.SOADDE} we obtain
\begin{align}
\label{Prop:ConvTranScal.PEqn0}
W_\delta\at{y}=-\frac{b_\delta}{2\delta}\int\limits_{-1/2}^{b_\delta y}R_\delta^{\prime\prime}\at{x}\dint{x}+ f_\delta\at{y}
\end{align}
with 
\begin{align*}
f_\delta\at{y}:=
\frac{b_\delta}{2\delta^2\la_\delta^2}\int\limits_{-1/2}^{b_\delta y}\at{\Phi^\prime\at{\frac{R_\delta\bat{x-1}}{\delta}}+\Phi^\prime\at{\frac{R_\delta\bat{x+1}}{\delta}}}\dint{x}
+\frac{b_\delta}{\delta^2\la_\delta^2}\int\limits_{b_\delta y-1}^{-1/2}\Phi^\prime\at{\frac{R_\delta\bat{x}}{\delta}}\dint{x}
\end{align*}
and
\begin{align*}
f_\delta^\prime\at{y}=\frac{b_\delta^2}{2\delta^2\la_\delta^2}\at{\Phi^\prime\at{\frac{R_\delta\bat{b_\delta y-1}}{\delta}}+\Phi^\prime\at{\frac{R_\delta\bat{b_\delta y+1}}{\delta}}}-\frac{b_\delta^2}{\delta^2\la_\delta^2}\Phi^\prime\at{\frac{R_\delta\bat{b_\delta y-1}}{\delta}}\,.
\end{align*}
Similarly, we derive from \BMHC \eqref{Eqn:TWInt} and \eqref{Eqn:DefFootScaling} \EMHC the identity
\begin{align*}
T_\delta\at{y}=\frac{1}{\delta^2\la_\delta^2}\int\limits_{b_\delta y-\tfrac32}^{b_\delta y-\tfrac12}\int\limits_{x-\tfrac12}^{x+\tfrac12}
\Phi^\prime\at{\frac{R_\delta\bat{\tilde{x}}}{\delta}}
\dint{\tilde{x}}\dint{x}\,,
\end{align*}
which can be written as 
\begin{align}
\label{Prop:ConvTranScal.PEqn1}
\begin{split}
T_\delta\at{y}&=\frac{1}{\delta^2\la_\delta^2}\int\limits_{-1}^{b_\delta y-\tfrac12}\int\limits_{-\tfrac12}^{x+\tfrac12}
\Phi^\prime\at{\frac{R_\delta\bat{\tilde{x}}}{\delta}}
\dint{\tilde{x}}\dint{x}+g_\delta\at{y}
\\&=-\frac{1}{2\delta}\int\limits_{-1}^{b_\delta y-\tfrac12}\int\limits_{-\tfrac12}^{x+\tfrac12}
R_\delta^{\prime\prime}\bat{\tilde{x}}
\dint{\tilde{x}}\dint{x}+g_\delta\at{y}+h_\delta\at{y}\,,
\end{split}
\end{align}
where the corresponding error terms are given by 
\begin{align*}
g_\delta\at{y}:=\frac{1}{\delta^2\la_\delta^2}\int\limits_{b_\delta y-\tfrac32}^{-1}\int\limits_{x-\tfrac12}^{x+\tfrac12}
\Phi^\prime\at{\frac{R_\delta\bat{\tilde{x}}}{\delta}}
\dint{\tilde{x}}\dint{x}+\frac{1}{\delta^2\la_\delta^2}\int\limits_{-1}^{b_\delta y-\tfrac12}\int\limits_{x-\tfrac12}^{-\tfrac12}
\Phi^\prime\at{\frac{R_\delta\bat{\tilde{x}}}{\delta}}
\dint{\tilde{x}}\dint{x}
\end{align*}
and
\begin{align*}
h_\delta\at{y}:=\frac{1}{2\delta^2\la_\delta^2}\int\limits_{-1}^{b_\delta y-\tfrac12}\int\limits_{-\tfrac12}^{x+\tfrac12}
\at{\Phi^\prime\at{\frac{R_\delta\bat{\tilde{x}-1}}{\delta}}
+\Phi^\prime\at{\frac{R_\delta\bat{\tilde{x}+1}}{\delta}}}
\dint{\tilde{x}}\dint{x}\,.
\end{align*}
\ul{\emph{Pointwise estimates for the error terms}}:
Employing the unimodality of $R_\delta$ as well as \eqref{Prop:Existence.Eqn1} we derive
\begin{align*}
\sup\limits_{\abs{x}\geq 1/2}\Phi^\prime\at{\frac{R_\delta\at{x}}{\delta}}\leq \Phi^\prime\at{\frac{R_\delta\at{\pm1/2}}{\delta}}\leq \Phi^\prime\at{\frac{3}{4\delta}}\leq \frac{C}{\delta^\mu}\exp\at{\frac{3}{4\delta}}\,,
\end{align*}
and infer from \eqref{Eqn:DefPrmsAandB} the estimate
\begin{align*}
\sup\limits_{\abs{x}\geq 1/2}\frac{b_\delta^2}{\delta^2\la_\delta^2 }\Phi^\prime\at{\frac{R_\delta\at{x}}{\delta}}
\leq C\frac{a_\delta^\mu}{\delta^\mu}\exp\at{-\frac{1}{a_\delta}}\exp\at{\frac{3}{4\delta}}\,.
\end{align*}
Consequently, and in view of \eqref{Eqn:LimitPrmA} and \eqref{Eqn:LowerBoundPrmB} we conclude
\begin{align}
\label{Prop:ConvTranScal.PEqn2}
\sup_{y\in I_\delta} \;\;\; \babs{f_\delta\at{y}} + \babs{f_\delta^\prime\at{y}}\;\leq \; C\exp\at{-\frac{c}{\delta}}\,,
\qquad \int\limits_{I_\delta}\babs{ f_\delta\at{y}}\dint{y}\leq C\exp\at{-\frac{c}{\delta}}\,,
\end{align}
where we additionally used that
\begin{align*}
-\tfrac32\leq b_\delta y -1\leq-\tfrac12\,,\qquad +\tfrac12 \leq b_\delta y+1\leq+\tfrac32\qquad \text{for all}\quad y\in I_\delta.
\end{align*}
In the same way we demonstrate that
\begin{align}
\label{Prop:ConvTranScal.PEqn3}
\sup_{y\in I_\delta} \;\;\; \babs{g_\delta\at{y}} + \babs{g_\delta^\prime\at{y}}+
\babs{g_\delta^{\prime\prime}\at{y}}+
\babs{h_\delta\at{y}} + \babs{h_\delta^\prime\at{y}}+
\babs{h_\delta^{\prime\prime}\at{y}}\; \leq \; C\exp\at{-\frac{c}{\delta}}\,.
\end{align}
\ul{\emph{Estimates for $W_\delta$}}: \BMHC Thanks to the definition and evenness \EMHC of $S_\delta$, see \eqref{Eqn:DefTipScaling}, we derive from \eqref{Prop:ConvTranScal.PEqn0} the identities
\begin{align}
\label{Prop:ConvTranScal.PEqn4}
W_\delta\at{y}=\tfrac12 S_\delta^{\prime}\bat{ y}+\tfrac12 S_\delta^\prime\at{y_\delta^*}+ f_\delta\at{y}\,,\qquad
W_\delta^\prime\at{y}=\tfrac12 S_\delta^{\prime\prime}\bat{ y}+ f_\delta^\prime\at{y}\,,
\end{align}
so \eqref{Prop:ConvTranScal.Eqn1}$_1$ follows from
\eqref{Eqn:LimW}, \eqref{Prop:ConvTranScal.PEqn2}, and Proposition \ref{Prop:ConvTipScal} because the properties of $S_0$ and \eqref{Eqn:UpperBoundPrmB} ensure that
\begin{align*}
\babs{ S_0^\prime\at{y_\delta^*}-2}\leq C \exp\at{-cy_\delta^*}\leq C\delta\,.
\end{align*}
Moreover, by \eqref{Prop:ConvTipScal.Eqn2}+\eqref{Prop:ConvTranScal.PEqn2}+\eqref{Prop:ConvTranScal.PEqn4} we find \eqref{Prop:ConvTranScal.Eqn1}$_2$ via
\begin{align*}
\int\limits_{I_\delta}\babs{W_\delta\at{y}-W_0\at{y}}\dint{y}\leq C\int\limits_{I_\delta}\babs{S_\delta^\prime\at{y}-S_0^\prime\at{y}}\dint{y}+C\abs{
y_\delta^*S_\delta^\prime\at{y_\delta^*}-y_\delta^*S_0^\prime\at{y_\delta^*}}+C\delta\;\leq \;C\delta\,.
\end{align*}
\ul{\emph{Estimates for $T_\delta$}}:  According to \eqref{Prop:ConvTranScal.PEqn1}, we have
\begin{align*}
T_\delta\at{y}&=
-\frac{1}{2\delta}\int\limits_{-1}^{b_\delta y-\tfrac12}
\Bat{R_\delta^{\prime}\bat{x+\tfrac12}-
R_\delta^{\prime}\bat{-\tfrac12}}
\dint{x}+g_\delta\at{y}+h_\delta\at{y}
\\&=-\frac{1}{2\delta}R_\delta\bat{b_\delta y}+
\frac{1}{2\delta}R_\delta\bat{\tfrac12}-\frac{1}{2\delta}R_\delta^\prime\at{\tfrac12}\Bat{b_\delta {y}+\tfrac12}+g_\delta\at{y}+h_\delta\at{y}
\end{align*}
and using that \eqref{Eqn:DefTipScaling} implies
\begin{align*}
-R_\delta\at{b_\delta y}+R_\delta\at{\tfrac12}=\delta\Bat{S_\delta\at{y}-S_\delta\at{y_\delta^*}}\,,\qquad R_\delta^\prime\at{\tfrac12}=-\frac{\delta}{b_\delta} S_\delta^\prime\at{y_\delta^*}
\end{align*}
we arrive at
\begin{align}
\label{Prop:ConvTranScal.PEqn5}
T_\delta\at{y}&=\tfrac12S_\delta\at{y}+\tfrac12 S_\delta^\prime\at{y_\delta^*}y+\tfrac12\Bat{
y_\delta^*S_\delta^\prime\at{y_\delta^*}-
S_\delta\at{y_\delta^*}
}+g_\delta\at{y}+h_\delta\at{y}
\end{align}
and
\begin{align*}
T_\delta^\prime\at{y}=\tfrac12S_\delta^\prime\at{y}+\tfrac12 S_\delta^\prime\at{y_\delta^*}+g_\delta^\prime\at{y}+h_\delta^\prime\at{y}\,,\qquad
T_\delta^{\prime\prime}\at{y}=\tfrac12S_\delta^{\prime\prime}\at{y}+g_\delta^{\prime\prime}\at{y}+h_\delta^{\prime\prime}\at{y}\,.
\end{align*}
Moreover, thanks to the explicit formula \eqref{Eqn:limS} as well as \eqref{Eqn:UpperBoundPrmB} we readily justify 
\begin{align*}
\babs{y_\delta^*S_0^\prime\at{y_\delta^*}-S_0\at{y_\delta^*}-2\ln{2}}\leq C\delta\,,
\end{align*}
so \eqref{Prop:ConvTranScal.Eqn2} is a consequence
of \eqref{Prop:ConvTipScal.Eqn1}, \eqref{Eqn:limT}, and \eqref{Prop:ConvTranScal.PEqn3}.
\end{proof}
%
%
%
\subsection{Scaling of the scalar quantities}\label{sect:ScalLaws}
%
The following result exploits the approximation under the tip and the transition scaling in order to characterize the asymptotic behavior of the amplitude parameter $a_\delta$, the width parameter $b_\delta$, and the speed parameter $\la_\delta$ from \eqref{Eqn:TWInt} and \eqref{Eqn:DefPrmsAandB}.
\begin{proposition}[scaling relations for $a_\delta$, $b_\delta$, and $\la_\delta$]
\label{Prop:ScalingRelations}
We have
\begin{align*}
b_\delta = 2\delta-2\delta^2+\DO{\delta^3}\,,\qquad a_\delta=\delta+\at{2\ln2-1}\delta^2+\DO{\delta^3}
\end{align*}
as well as
\begin{align*}
\la_\delta^2=\frac{1}{\delta^\mu}\exp\at{1+\frac{1}{\delta}}\Bat{1 +\DO{\delta}}
\end{align*}
for all sufficiently small $\delta>0$.
\end{proposition}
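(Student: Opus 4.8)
The three parameters are linked by the definition \eqref{Eqn:DefPrmsAandB}, $b_\delta^2=\delta^2a_\delta^\mu\la_\delta^2\exp\at{-1/a_\delta}$, which expresses $\la_\delta^2$ in terms of $a_\delta$ and $b_\delta$; the plan is thus to determine $a_\delta$ and $b_\delta$ to order $\delta^2$ and to read off the speed law at the very end by inserting the expansion of $\exp\at{1/a_\delta}$. Since $a_\delta=\delta/R_\delta\at0$, everything reduces to pinning down the two numbers $R_\delta\at0$ and $b_\delta$ to second order, for which I would extract two scalar relations from the scaled profiles of \S\ref{sect:TipScal}--\S\ref{sect:TranFootScal}, always retaining the explicit limits $S_0$, $W_0$ and their exact integrals.

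\emph{First relation} (couples $b_\delta$ and $R_\delta\at0$). Evaluating the tip scaling \eqref{Eqn:DefTipScaling} at $y_\delta^\ast=1/\at{2b_\delta}$ and using $S_\delta\at0=0$ gives $S_\delta\at{y_\delta^\ast}=\bat{R_\delta\at0-R_\delta\at{\tfrac12}}/\delta$. By Proposition \ref{Prop:ConvTipScal} this equals $S_0\at{y_\delta^\ast}+\DO\delta$, and \eqref{Eqn:limS} together with $\ln\cosh\at z=z-\ln2+\DO{\exp\at{-2z}}$ gives $S_0\at{y_\delta^\ast}=1/b_\delta-2\ln2$ up to exponentially small terms. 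On the other hand $R_\delta\at{\tfrac12}=\int_0^1V_\delta$ and $R_\delta\at0=2\int_0^{1/2}V_\delta$, and computing the overlap $\int_{1/2}^1V_\delta$ by the transition scaling -- using $V_\delta\at{\tfrac12+b_\delta y}=\tfrac\delta{b_\delta}W_\delta\at{-y}$, the estimate \eqref{Prop:ConvTranScal.Eqn1} and $\int_{-\infty}^0W_0\dint y=\ln2$ for $W_0=1+\tanh$ -- yields $R_\delta\at{\tfrac12}=\tfrac12R_\delta\at0+\delta\ln2+\DO{\delta^2}$. Eliminating $R_\delta\at{\tfrac12}$ leaves $1/b_\delta=R_\delta\at0/\at{2\delta}+\ln2+\DO\delta$.

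\emph{Second relation} (fixes $R_\delta\at0$). I would test the integral equation \eqref{Eqn:TWInt} against two explicit functions. Pairing with $V_\delta$ and using the self-adjointness of $\calA$, so that $\skp{V_\delta}{\calA g}=\skp{R_\delta}{g}$, gives -- with the normalization $\norm{V_\delta}_2=1$ from \eqref{Eqn:OptProb} -- the identity $\la_\delta^2\delta=\int_{-L}^{L}R_\delta\,\Phi^\prime\at{R_\delta/\delta}$, while pairing with the constant $1$ gives $\la_\delta^2\norm{V_\delta}_1=\tfrac1\delta\int_{-L}^{L}\Phi^\prime\at{R_\delta/\delta}$. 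Both integrals concentrate on $\abs x\leq\tfrac12$ and are evaluated by the tip scaling, where $R_\delta\at{b_\delta y}/\delta=1/a_\delta-S_\delta\at y$ and $\Phi^\prime\at{R_\delta/\delta}=a_\delta^{-\mu}\at{1-a_\delta S_\delta}^\mu\exp\at{1/a_\delta}\exp\at{-S_\delta}\bat{1+\DO{a_\delta}}$. The decisive step is to form the \emph{ratio} of the two identities: the $\mu$-dependent factor $a_\delta^{-\mu}\exp\at{1/a_\delta}$ and the multiplier $\la_\delta^2$ cancel, the $\DO{a_\delta}$ remainder and the subleading profile enter only through a common weight and drop out at this order, and there remains the $\mu$-independent relation $\norm{V_\delta}_1=\tfrac{a_\delta}\delta\bat{1+\tfrac{a_\delta}2\int_\Rset S_0\exp\at{-S_0}\dint y+\DO{\delta^2}}$ with the explicit constant $\int_\Rset S_0\exp\at{-S_0}\dint y=4-4\ln2$. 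Evaluating $\norm{V_\delta}_1=R_\delta\at0+2\delta\ln2+\DO{\delta^2}$ once more by the transition scaling and equating the two expressions gives $R_\delta\at0=1-\at{2\ln2-1}\delta+\DO{\delta^2}$.

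\emph{Conclusion and main obstacle.} Now $a_\delta=\delta/R_\delta\at0$ yields $a_\delta=\delta+\at{2\ln2-1}\delta^2+\DO{\delta^3}$; substituting into the first relation gives $1/b_\delta=1/\at{2\delta}+\tfrac12+\DO\delta$, hence $b_\delta=2\delta-2\delta^2+\DO{\delta^3}$; and $\la_\delta^2=b_\delta^2\delta^{-2}a_\delta^{-\mu}\exp\at{1/a_\delta}$ with $1/a_\delta=1/\delta-\at{2\ln2-1}+\DO\delta$ produces the stated law, the factor $\exp\at1$ arising as $\exp\at{1-2\ln2}=\exp\at1/4$ against $b_\delta^2/\delta^2\to4$. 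The main difficulty is that the $\delta^2$-coefficients are universal, whereas any single projection of \eqref{Eqn:TWInt} carries $\mu$- and $\Psi$-dependent corrections already at order $\delta$ -- through the next-order profile $S_1$ of \eqref{Eqn:AsympExp} and the remainder in $\Psi\at r/r^\mu-1$. The scheme succeeds only because these cancel in the ratio of the two projections; making that cancellation rigorous, and verifying that the merely $\DO\delta$ bounds of Propositions \ref{Prop:ConvTipScal} and \ref{Prop:ConvTranScal} always appear multiplied by $a_\delta=\DO\delta$ (so that they affect the final coefficients only at order $\delta^2$), is the delicate part, while the explicit evaluation of $\int_\Rset S_0\exp\at{-S_0}\dint y$ and of $\int_{-\infty}^0W_0\dint y$ is routine.
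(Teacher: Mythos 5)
Your proposal is correct, and it reaches the stated expansions by a genuinely different route than the paper. The paper extracts its two scalar relations from (i) the normalization $\norm{V_\delta}_2=1$, expanded under the transition scaling via \eqref{Prop:ConvTranScal.PEqn4}, which requires the exact integral $\int_{I_\delta}\at{S_0^\prime}^2\dint{y}=4/b_\delta-8+\DO{\delta}$ and pins down $b_\delta$ \emph{first}; and (ii) the matching of tip and \emph{foot} scalings at $y_\delta^*$, namely $\delta/a_\delta-\delta S_\delta\at{y_\delta^*}=\delta T_\delta\at{y_\delta^*}$ with $T_\delta\at{y_\delta^*}=y_\delta^*S_\delta^\prime\at{y_\delta^*}+\DO{\delta}$, which then yields $a_\delta$. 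You instead never touch the foot scaling $T_\delta$: your first relation matches tip and transition scalings ($R_\delta\at{0}-R_\delta\at{\tfrac12}=\delta S_\delta\at{y_\delta^*}$ together with $\int_{1/2}^{1}V_\delta=\delta\ln 2+\DO{\delta^2}$ from $W_0$), and your second relation replaces the norm expansion by the ratio of two projections of \eqref{Eqn:TWInt} (against $V_\delta$, using $\norm{V_\delta}_2=1$ and self-adjointness of $\calA$, and against the constant $1$), in which $\la_\delta^2$ and the prefactor $a_\delta^{-\mu}\exp\at{1/a_\delta}$ cancel; this determines $R_\delta\at{0}$, hence $a_\delta$, \emph{first}, and only then $b_\delta$. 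I checked your constants ($\int_\Rset \exp\at{-S_0}\dint{y}=2$, $\int_\Rset S_0\exp\at{-S_0}\dint{y}=4-4\ln 2$, $\int_{-\infty}^0 W_0\dint{y}=\ln 2$) and the final arithmetic; both are right. The cancellation you flag as delicate is indeed provable with the paper's own machinery: the quotient $\int S_\delta\Phi^\prime/\int\Phi^\prime$ only needs $\DO{\delta}$ accuracy since it is multiplied by $a_\delta$, and the perturbations coming from $S_\delta-S_0=\DO{\delta}$, from $\bat{1-a_\delta S_\delta}^\mu$ (note $c\leq 1-a_\delta S_\delta=\tfrac{a_\delta}{\delta}R_\delta\at{b_\delta y}\leq C$), and from $\abs{\Psi\at{r}/r^\mu-1}\leq C/r$ are all controlled in weighted $\fspaceL^1$ by the decay bound \eqref{Prop:ConvTipScal.PEqn11}. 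Two small points you gloss over: the evaluation $\norm{V_\delta}_1=R_\delta\at{0}+2\delta\ln 2+\DO{\delta^2}$ needs the tail bound $\sup_{1\leq\abs{x}\leq L}V_\delta\at{x}\leq C\exp\at{-c/\delta}$, which is exactly the paper's first step \eqref{Prop:ScalingRelations.Eqn1} (unimodality plus \eqref{Prop:ConvTranScal.PEqn2}+\eqref{Prop:ConvTranScal.PEqn4}); and the concentration of both projected integrals on $\abs{x}\leq\tfrac12$ uses $R_\delta\at{0}\to1$ so that $\Phi^\prime\at{3/\at{4\delta}}$ is exponentially smaller than $\exp\at{1/a_\delta}$. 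What each approach buys: the paper's keeps all error bookkeeping linear (no quotients) and recycles identities already established in \S\ref{sect:TranFootScal}, while yours makes transparent \emph{why} the expansions of $a_\delta$ and $b_\delta$ are universal, i.e.\ independent of $\mu$ and of the perturbation $\Psi$ -- a fact that is true but not explained by the paper's computation.
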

\begin{proof}
The unimodality of $V_\delta$ combined \BMHC with \EMHC 
\eqref{Eqn:LowerBoundPrmB}+\eqref{Eqn:DefTranScaling}+\eqref{Prop:ConvTranScal.PEqn2}+\eqref{Prop:ConvTranScal.PEqn4} provides
\begin{align}
\label{Prop:ScalingRelations.Eqn1}
0\leq \sup_{1\leq \abs{x}\leq L}{V_\delta\at{x}}\leq V_\delta\at{-1}\leq \frac{\delta}{b_\delta}W_\delta\at{-y_\delta^*}\leq \frac{\delta}{b_\delta} f_\delta\at{-y_\delta^*} \leq C\exp\at{-\frac{c}{\delta}}=\DO{\delta^2}\,.
\end{align}
The norm constraint $\norm{V_\delta}_2=1$, see Proposition \ref{Prop:Existence},  therefore yields
\begin{align*}
1=2\int\limits_{-1}^0 V_\delta\at{x}^2\dint{x}+\DO{\delta^2}=\frac{2\delta^2}{b_\delta}\int\limits_{I_\delta}W_\delta\at{y}^2\dint{y}+\DO{\delta^2}\,,
\end{align*}
and since $S_\delta$ is even we compute
\begin{align*}
4\int\limits_{I_\delta}W_\delta\at{y}^2\dint{y}&=\int\limits_{I_\delta}\at{S_\delta^\prime\at{y}+
S_\delta^\prime\at{y_\delta^*}}^2\dint{y}+\DO{\delta}\\&
=
\int\limits_{I_\delta}\at{S_\delta^\prime\at{y}}^2\dint{y}+\BMHC \frac{S_\delta^\prime\at{y_\delta^*}^2}{b_\delta}\EMHC
+\DO{\delta}
\\&=
\int\limits_{I_\delta}\at{S_0^\prime\at{y}}^2\dint{y}+\BMHC\frac{S_0^\prime\at{y_\delta^*}^2}{b_\delta}\EMHC +\DO{\delta}\,,
\end{align*}
where we additionally used
\begin{align*}
\abs{\int\limits_{I_\delta}\at{S_\delta^\prime\at{y}}^2\dint{y}-\int\limits_{I_\delta}\at{S_0^\prime\at{y}}^2\dint{y}}\leq C \int\limits_{I_\delta} \babs{S_\delta^\prime\at{y}-S_0^\prime\at{y}} \dint{y}
\end{align*}
as well as \eqref{Prop:ConvTipScal.Eqn1}+\eqref{Prop:ConvTipScal.Eqn2}+\eqref{Eqn:LowerBoundPrmB}+\eqref{Prop:ConvTranScal.PEqn2}+\eqref{Prop:ConvTranScal.PEqn4}. By direct computations we justify that
\begin{align*}
\int\limits_{I_\delta}\at{S_0^\prime\at{y}}^2\dint{y}=\frac{4}{b_\delta}-8+\DO{\delta}\,,
\qquad \frac{S_0^\prime\at{y_\delta^*}^2}{b_\delta}
=\frac{4}{b_\delta}+\DO{\delta}
\end{align*}
holds due to \eqref{Eqn:UpperBoundPrmB} and the exponential decay of $S_0^\prime$. In summary, we obtain
\begin{align*}
1=4\frac{\delta^2}{b_\delta^2}\Bat{1-b_\delta+\DO{\delta b_\delta}}+\DO{\delta^2}
\end{align*}
and hence the desired expansion for $b_\delta$. Evaluating \BMHC both \eqref{Eqn:DefTipScaling} and \eqref{Eqn:DefFootScaling} for $y=y^*_\delta$ \EMHC gives
\begin{align*}
\frac{\delta}{a_\delta}-\delta S_\delta\at{y_\delta^*}= R_\delta\at{\tfrac12}=\delta T_\delta\at{y_\delta^*}\,,
\end{align*}
while $T_\delta\at{y_\delta^*}=y_\delta^* S_\delta^\prime\at{y_\delta^*}+\DO{\delta}$ is a direct consequence of \eqref{Prop:ConvTranScal.PEqn3}+\eqref{Prop:ConvTranScal.PEqn5}. Rearranging terms we find
\begin{align*}
\frac{\delta}{a_\delta}=2\delta y_\delta^* S_0^\prime\at{y_\delta^*}+\delta\Bat{ S_0\at{y_\delta^*}-y_\delta^*S_0^\prime\at{y_\delta^*}}+\DO{\delta^2}
\end{align*}
thanks to Proposition \ref{Prop:ConvTipScal}, so the properties of $S_0$ provide via
\begin{align*}
\frac{\delta}{a_\delta}=2\frac{\delta}{b_\delta}-\delta \ln4+\DO{\delta^2}
\end{align*}
the expansion of $a_\delta$. Finally, by \eqref{Eqn:DefPrmsAandB} we 
obtain
\begin{align*}
\delta^\mu\la_\delta^2\exp\at{-\frac{1}{\delta}}=\frac{b_\delta^2}{\delta^2}\frac{\delta^\mu}{a_\delta^\mu}\exp\at{\frac{1}{a_\delta}-\frac{1}{\delta}}\,,
\end{align*}
and this implies the law for $\la_\delta$.
\end{proof}
As already discussed at the end of \S\ref{sect:TipScal}, our asymptotic estimates can be improved for the Toda potential \eqref{Eqn:TodaPot}. Specifically, the error bounds in \eqref{Prop:ConvTranScal.Eqn1} and \eqref{Prop:ConvTranScal.Eqn2} are exponentially small in $\delta$ and provide more accurate asymptotic expansions in Proposition \ref{Prop:ScalingRelations}.
%
%
\subsection{Approximation formulas}\label{sect:Formulas}
%
%
We are now able to proof the main result from \S\ref{sec:Intro} about the approximation formulas  \eqref{Eqn:AppProfiles}. Numerical results concerning the approximation error are shown in Figure~\ref{Fig:AppErrors}.
\begin{figure}[t!] %
\centering{ %
\includegraphics[width=0.95\textwidth]{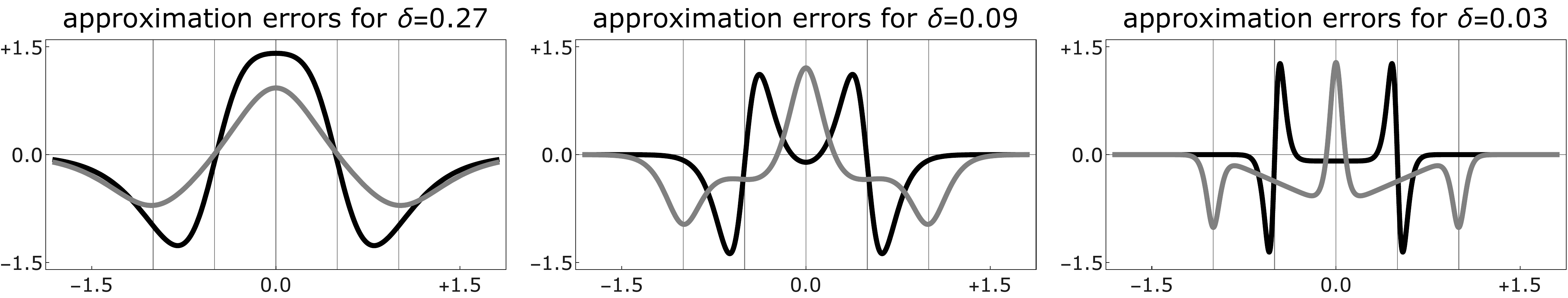} %
} %
\caption{Scaled approximation errors $c_1\delta^{-1}\at{V_\delta-\ol{V}_\delta}$ (black)
and $c_2\delta^{-2}\at{R_\delta-\ol{R}_\delta}$ (gray) as function of $x$ for the data from Figure \ref{Fig:Profiles}, where the artificial prefactors 
$c_1=0.5$ and $c_2=2.0$ are chosen for better illustration. The main contributions to the approximation error for the velocity
profiles stem from the vicinity of $x=\pm\tfrac12$, so $\norm{V_\delta-\ol{V}_\delta}_1$ 
is in fact asymptotically smaller than $\norm{V_\delta-\ol{V}_\delta}_\infty$.  
} %
\label{Fig:AppErrors}
\end{figure} %
\begin{theorem}[approximation formulas]
\label{Thm:Approximation}
There exists a constant $C$ such that
\begin{align*}
\norm{V_\delta-\ol{V}_\delta}_p\leq C\delta^{1+1/p}\,,\qquad \norm{R_\delta-\ol{R}_\delta}_p\leq C\delta^2
\end{align*}
\BMHC hold \EMHC for any $1\leq p \leq \infty$ and all sufficiently small $\delta>0$.
\end{theorem}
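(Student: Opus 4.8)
The plan is to exploit that the explicit profiles in \eqref{Eqn:AppProfiles} are precisely the global functions whose tip, transition, and foot scalings reproduce the limit profiles $S_0$, $W_0$, $T_0$ from \eqref{Eqn:limS}, \eqref{Eqn:LimW}, and \eqref{Eqn:limT}. I therefore introduce the scaled approximations
\[
\ol{S}_\delta\at{y}:=\tfrac{1}{\delta}\Bat{\ol{R}_\delta\at{0}-\ol{R}_\delta\at{b_\delta y}}\,,\qquad
\ol{W}_\delta\at{y}:=\tfrac{b_\delta}{\delta}\ol{V}_\delta\bat{-\tfrac12+b_\delta y}\,,\qquad
\ol{T}_\delta\at{y}:=\tfrac{1}{\delta}\ol{R}_\delta\at{-1+b_\delta y}
\]
in complete analogy with \eqref{Eqn:DefTipScaling}, \eqref{Eqn:DefTranScaling}, and \eqref{Eqn:DefFootScaling}, and compare $V_\delta$ with $\ol{V}_\delta$ and $R_\delta$ with $\ol{R}_\delta$ region by region, converting the scaled estimates of Propositions~\ref{Prop:ConvTipScal} and~\ref{Prop:ConvTranScal} into physical $L^p$-bounds through the change of variables $\dint{x}=b_\delta\dint{y}$. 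The first and central step is to insert the formulas \eqref{Eqn:AppProfiles} and check, using Proposition~\ref{Prop:ScalingRelations} and the fact that $\tfrac{1}{2\delta}$ is large (so that each $\cosh$ or $\sinh$ evaluated near $\tfrac{1}{2\delta}$ equals its dominant exponential up to a relative error $\DO{\exp\at{-1/\delta}}$), that
\[
\sup_{y\in I_\delta}\Bat{\babs{\ol{S}_\delta-S_0}+\babs{\ol{S}_\delta^\prime-S_0^\prime}}\leq C\delta\,,\qquad
\sup_{y\in I_\delta}\babs{\ol{W}_\delta-W_0}\leq C\delta\,,\qquad
\int_{I_\delta}\babs{\ol{W}_\delta-W_0}\dint{y}\leq C\delta\,,
\]
and analogously for $\ol{T}_\delta-T_0$. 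The integral bound is the delicate one: the prefactor discrepancy $1-\tfrac{b_\delta}{\delta}\cdot\tfrac{1+\delta}{2}=\DO{\delta^2}$ contributes only $\DO{\delta^2}\cdot\babs{I_\delta}=\DO{\delta}$, while the remaining deviation decays away from the tip. Here I would also record the scalar matching $\ol{R}_\delta\at{0}-R_\delta\at{0}=\DO{\delta^2}$, which is exactly where the expansion of $a_\delta$ from Proposition~\ref{Prop:ScalingRelations} and that of $\ol{R}_\delta\at{0}$ must agree through order $\delta$ (both having $\delta$-coefficient $-\at{2\ln2-1}$).

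Combining this with Propositions~\ref{Prop:ConvTipScal} and~\ref{Prop:ConvTranScal} by the triangle inequality yields, in the scaled variables, $\sup_{I_\delta}\babs{S_\delta-\ol{S}_\delta}\leq C\delta$, $\sup_{I_\delta}\babs{T_\delta-\ol{T}_\delta}\leq C\delta$, $\sup_{I_\delta}\babs{W_\delta-\ol{W}_\delta}\leq C\delta$, and $\int_{I_\delta}\babs{W_\delta-\ol{W}_\delta}\dint{y}\leq C\delta$. For the distance profile I then split the cell: on the tip region $\abs{x}\leq\tfrac12$ I write $R_\delta\at{b_\delta y}-\ol{R}_\delta\at{b_\delta y}=\bat{R_\delta\at{0}-\ol{R}_\delta\at{0}}-\delta\bat{S_\delta-\ol{S}_\delta}=\DO{\delta^2}$; on the foot regions $\tfrac12\leq\abs{x}\leq\tfrac32$ I use $R_\delta-\ol{R}_\delta=\delta\bat{T_\delta-\ol{T}_\delta}=\DO{\delta^2}$; and on the tail $\abs{x}\geq\tfrac32$ both profiles are $\DO{\exp\at{-c/\delta}}$. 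Thus $\norm{R_\delta-\ol{R}_\delta}_\infty\leq C\delta^2$ on all of $\ccinterval{-L}{+L}$, and since the cell has finite measure, $\norm{R_\delta-\ol{R}_\delta}_p\leq\at{2L}^{1/p}\norm{R_\delta-\ol{R}_\delta}_\infty\leq C\delta^2$ for every $p$.

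For the velocity profile, evenness reduces matters to $x\in\ccinterval{-1}{0}$, the image of $I_\delta$ under $x=-\tfrac12+b_\delta y$, where $V_\delta-\ol{V}_\delta=\tfrac{\delta}{b_\delta}\bat{W_\delta-\ol{W}_\delta}$. The scaled estimates give directly $\norm{V_\delta-\ol{V}_\delta}_\infty\leq C\delta$ and, after $\dint{x}=b_\delta\dint{y}$, $\norm{V_\delta-\ol{V}_\delta}_1=\delta\int_{I_\delta}\babs{W_\delta-\ol{W}_\delta}\dint{y}\leq C\delta^2$ (the tail again contributing $\DO{\exp\at{-c/\delta}}$). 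Interpolating, $\int\babs{V_\delta-\ol{V}_\delta}^p\leq\norm{V_\delta-\ol{V}_\delta}_\infty^{p-1}\norm{V_\delta-\ol{V}_\delta}_1\leq C\delta^{p+1}$, i.e.\ $\norm{V_\delta-\ol{V}_\delta}_p\leq C\delta^{1+1/p}$. This interpolation also explains why the velocity error is genuinely $p$-dependent whereas the distance error is not: its $L^\infty$-size $\DO{\delta}$ is attained only in the $\DO{\delta}$-wide transition layers near $x=\pm\tfrac12$.

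The main obstacle is the first step, the sole place where the concrete formulas \eqref{Eqn:AppProfiles} enter. Beyond the routine sup-norm matching of the three scaled approximations, one must establish the genuine integral (decay) bound for $\ol{W}_\delta-W_0$ that produces the decisive $\DO{\delta^2}$ in $L^1$ for the velocity, and one must verify that the second-order ($\delta^2$) expansion data supplied by Proposition~\ref{Prop:ScalingRelations} cancel exactly against the corresponding terms hidden in the $\cosh/\sinh$ expansions of $\ol{R}_\delta$ and $\ol{V}_\delta$; were this cancellation to fail, the distance error would degrade to $\DO{\delta}$ and the stated bounds would be lost.
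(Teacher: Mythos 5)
Your proposal is correct, but it handles the distance profile by a genuinely different route than the paper. For the velocity profile your argument is essentially the paper's: the paper introduces the intermediate function $\widehat{V}_\delta\at{x}:=\tfrac{\delta}{b_\delta}W_0\bat{\tfrac{x+1/2}{b_\delta}}$, compares $V_\delta$ with $\widehat{V}_\delta$ via Proposition \ref{Prop:ConvTranScal} and $\widehat{V}_\delta$ with $\ol{V}_\delta$ via Proposition \ref{Prop:ScalingRelations} and explicit $\tanh$-estimates, then interpolates between $p=1$ and $p=\infty$; your triangle inequality $W_\delta\to W_0\to\ol{W}_\delta$ in scaled variables is the same computation. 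For $R_\delta$, however, the paper never touches the tip or foot scalings in the proof of Theorem \ref{Thm:Approximation}: it observes that $\ol{R}_\delta=\calA\ol{V}_\delta$ holds \emph{exactly} (checked via $\ol{R}_\delta^{\,\prime}\at{x}=\ol{V}_\delta\at{x+\tfrac12}-\ol{V}_\delta\at{x-\tfrac12}$ and decay at infinity), so that Young's inequality for convolutions gives $\norm{R_\delta-\ol{R}_\delta}_\infty\leq\norm{V_\delta-\ol{V}_\delta}_1$ and $\norm{R_\delta-\ol{R}_\delta}_1\leq\norm{V_\delta-\ol{V}_\delta}_1$, making the $\delta^2$-bound a one-line corollary of the $L^1$-bound for the velocity error. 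Your region-by-region assembly instead requires the foot-scaling estimates of Proposition \ref{Prop:ConvTranScal}, sup-norm matching of $\ol{S}_\delta$ and $\ol{T}_\delta$ against $S_0$ and $T_0$, and the scalar matching $R_\delta\at{0}-\ol{R}_\delta\at{0}=\DO{\delta^2}$; all of these do hold (the $\delta$-coefficients are indeed both $-\at{2\ln2-1}$, and the linearly-growing-in-$y$ error terms $2\delta\ln\cosh\bat{\tfrac{b_\delta}{2\delta}y}$ and $2\bato{\ln\cosh\bat{\tfrac{b_\delta}{2\delta}y}-\ln\cosh\at{y}}$ cancel to leading order on $I_\delta$, where $y$ may be as large as $\tfrac{1}{4\delta}$ -- a point you should make explicit, since without this cancellation the sup-bounds you call routine would fail). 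The paper's convolution identity buys a shorter and more robust proof that sidesteps every one of these verifications and explains structurally why the distance error is $\DO{\delta^2}$ uniformly in $p$; your route buys independence from noticing that identity, at the cost of the delicate second-order cancellations you correctly flag as the main obstacle.
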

\begin{proof} We prove the assertions for $p=1$ and $p=\infty$ only; the 
estimates in the general case follow by interpolation, i.e., from $\norm{\cdot}_p\leq
\norm{\cdot }_1^{1/p}\norm{\cdot }_\infty^{1-1/p}$.
\par
\ul{\emph{Approximation of $V_\delta$}}: Since both $V_\delta$ and $\ol{V}_\delta$ are even functions and because 
\begin{align}
\label{Thm:Approximation.PEqn1}
\sup_{1\leq \abs{x}\leq L}\babs{V_\delta\at{x}}+\abs{\ol{V}_\delta\at{x}}\leq C\delta^2
\end{align}
is guaranteed by \eqref{Eqn:AppProfiles} and \eqref{Prop:ScalingRelations.Eqn1},
it is sufficient to establish the desired estimates on the domain
\begin{align*}
-1\leq x\leq 0\,.
\end{align*} 
Moreover, the function $\widehat{V}_\delta$ with
\begin{align*}
\widehat{V}_\delta\at{x}:=\frac{\delta}{b_\delta}W_0\at{\frac{x+\tfrac12}{b_\delta}}=\frac{\delta}{b_\delta}\at{\tanh\at{\frac{x+\tfrac12}{b_\delta}}+1}
\end{align*}
satisfies
\begin{align}
\label{Thm:Approximation.PEqn2}
\sup_{-1\leq x\leq0} \babs{\widehat{V}_\delta\at{x}-V_\delta\at{x}}\leq 
\sup_{y\in{I_\delta}} \frac{\delta}{b_\delta}\babs{W_0\at{y}-W_\delta\at{y}}\leq C\delta
\end{align}
and
\begin{align}
\label{Thm:Approximation.PEqn3}
\int\limits_{-1}^{0} \babs{\widehat{V}_\delta\at{x}-V_\delta\at{x}}\dint{x}\leq 
\delta\int\limits_{I_\delta}\babs{W_0\at{y}-W_\delta\at{y}}\dint{y}\leq C\delta^2
\end{align}
according to \eqref{Eqn:DefTranScaling}, Proposition \ref{Prop:ConvTranScal}, and Proposition \ref{Prop:ScalingRelations}. In order to bound the difference between $\widehat{V}_\delta$ and $\ol{V}_\delta$, we write
\begin{align*}
\ol{V}_\delta\at{x}=
\frac{1+\delta}{2}\at{\tanh\at{\frac{x+\tfrac12}{2\delta}}-\tanh\at{\frac{x-\tfrac12}{2\delta}}}
\end{align*}
and observe that 
\begin{align*}
\abs{\frac{1}{b_\delta}-\frac{1}{2\delta}}\leq C
\,,\qquad
\abs{\frac{\delta}{b_\delta}-\frac{1+\delta}{2}}\leq C\delta^2
\,,\qquad
\sup\limits_{-1\leq x\leq0}\abs{\tanh\at{\frac{x-\tfrac12}{2\delta}}+1}\leq C\delta^2
\end{align*}
as well as \BMHC
\begin{align*}
\abs{\tanh\at{\frac{x+\tfrac12}{2\delta}}-\tanh\at{\frac{x+\tfrac12}{b_\delta}}}\leq
C\at{
\sup_{\xi\in\Rset}\abs{\xi\tanh^\prime\at{\xi}}}\abs{\frac{b_\delta-2\delta}{\delta}}\leq C\delta
\end{align*}
and 
\begin{align*}
\int\limits_{-1}^0\abs{\tanh\at{\frac{x+\tfrac12}{2\delta}}-\tanh\at{\frac{x+\tfrac12}{b_\delta}}}\dint{x}&=
b_\delta\int\limits_{-1/b_\delta}^{+1/b_\delta}\abs{\tanh\at{\frac{b_\delta}{2\delta}y}-\tanh\at{y}}\dint{y}
\leq C\delta^2
\end{align*}
are \EMHC granted by Proposition \ref{Prop:ScalingRelations} and the properties of the hyperbolic tangent. We therefore \BMHC find the desired \EMHC estimates for $V_\delta-\ol{V}_\delta$ \BMHC by combining \eqref{Thm:Approximation.PEqn2} and \eqref{Thm:Approximation.PEqn3} with \EMHC
\begin{align*}
\sup_{-1\leq x\leq0} \babs{\widehat{V}_\delta\at{x}-\ol{V}_\delta\at{x}}\leq C\delta\,,\qquad
\int\limits_{-1}^0\babs{\widehat{V}_\delta\at{x}-\ol{V}_\delta\at{x}}\dint{x}\leq C\delta^2\,.
\end{align*}
\ul{\emph{Approximation of $R_\delta$}}: Direct computations yield
\begin{align*}
\ol{V}_\delta\at{x}\quad\xrightarrow{x\to\pm\infty}\quad0
\,,\qquad
\ol{R}_\delta\at{x}\quad\xrightarrow{x\to\pm\infty}\quad0
\end{align*}
as well as 
\begin{align*}
\ol{R}_\delta^{\,\prime}\at{x}=\ol{V}_\delta\at{x+\tfrac12}-\ol{V}_\delta\at{x-\tfrac12}\qquad \text{for all}\quad x\in\Rset\,,
\end{align*}
and using the definition of $\calA$ in \eqref{Eqn:DefOpA} we show that
\begin{align*}
\ol{R}_\delta=\calA \ol{V}_\delta
\end{align*}
holds without any approximation error. Combining this with $R_\delta=\calA V_\delta$ and \BMHC Young's inequality for convolutions \EMHC we verify
\begin{align*}
\bnorm{\ol{R}_\delta-R_\delta}_1\leq
\bnorm{\ol{V}_\delta-V_\delta}_1\,,\qquad 
\bnorm{\ol{R}_\delta-R_\delta}_\infty\leq
\bnorm{\ol{V}_\delta-V_\delta}_1\,,
\end{align*}
and the proof is complete.
\end{proof}
The definitions of $\ol{V}_\delta$ and $\ol{R}_\delta$ in \eqref{Eqn:AppProfiles} imply
\begin{align*}
\norm{V_0-\ol{V}_\delta}_p\sim\delta^{1/p}\,,\qquad \norm{R_0-\ol{R}_\delta}_p\sim{\delta}\,,
\end{align*}
\BMHC so \EMHC $\ol{V}_\delta$ and $\ol{R}_\delta$ can in fact be regarded 
as the leading-order terms in the high-energy limit and satisfy \eqref{Eqn:AppResult}. We also mention that the bound for $\norm{V_\delta-\ol{V}_\delta}_1$ from Theorem \ref{Thm:Approximation} depends on $L$ due to the rather rough estimate of $V_\delta\at{x}-\widehat{V}_\delta\at{x}$ on the domain $1\leq\abs{x}\leq L$, see \eqref{Thm:Approximation.PEqn1}. However, adapting the arguments from \cite{HR10,HM15} we can establish exponential decay estimates for $V_\delta$ and in combination with the tail behavior of $\ol{V}_\delta$ we obtain improved bounds which are independent of $L$ and allow us to pass to the solitary limit $L\to\infty$. Our main result therefore covers solitary waves, too.
\par
We finally mention that the error bounds in Theorem \ref{Thm:Approximation} 
are optimal for the Toda chain \eqref{Eqn:TodaPot} because we still
have 
\begin{align*}
\frac{1}{b_\delta}-\frac{1}{2\delta}\quad\xrightarrow{\delta\to0}\quad \tfrac12
\end{align*} 
and hence a rather large difference between $\widehat{V}_\delta$ and $\ol{V}_\delta$. The approximation error between $\widehat{V}_\delta$ and $V_\delta$, however, is exponentially small in $\delta$.
%
%
%
\appendix
\section{Proof of Proposition \ref{Prop:Existence}}\label{app}
%
The key arguments have been developed in \cite{Her10} and can be sketched as follows.
\par
\ul{\emph{Existence of maximizer}}: By standard arguments we show that the convex cone $\calC$ from \eqref{Eqn:DefCone} is closed under weak $\fspaceL^2$-convergence. Moreover, the convolution operator $\calA$ from \eqref{Eqn:DefOpA} satisfies
\begin{align}
\label{Eqn:App2}
\norm{\calA V}_\infty\leq \norm{V}_2
\end{align}
and is -- in the periodic setting $L<\infty$ -- compact. These observations imply that
\BMHC the energy functional $\calP_\delta$ from \eqref{Eqn:DefOpA} is continuous with respect to the \EMHC weak $\fspaceL^2$-topology and attains its maximum on the weakly compact set
\begin{align}
\label{Eqn:App1}
\ol{B}_1\at{0}\cap \calC\,,
\end{align}
where $\ol{B}_1\at{0}$ denotes the closed unit ball with respect to $\norm{\cdot}_2$. Furthermore, since the map
\begin{align*}
s\quad \mapsto \quad \calP_\delta\at{s V}
\end{align*}
is -- due to the monotonicity of $\Phi^\prime$ -- for any fixed $V\in\calC$ strictly increasing with respect to $s\geq 0$, we conclude that each maximizer $V_\delta$ of $\calP_\delta$ in the set \eqref{Eqn:App1} belongs to the sphere $\partial\ol{B}_1\at{0}$. 
\par
\ul{\emph{Verification of the traveling wave equation}}:
Due to the shape constraint $V_\delta\in\calC$ it is not obvious that 
\eqref{Eqn:TWInt} is the Euler-Lagrange equation for a maximizer $V_\delta$ but we can argue as follows: We introduce an `improvement operator' $\calF_\delta$ by
\begin{align*}
\calF_\delta\at{V}:= \frac{\calG_\delta\at{V}}{\norm{\calG_\delta\at{V}}_2}\,,\qquad \calG_\delta\at{V}:=\partial \calP_\delta\at{V}=\frac{1}{\delta}\calA \Phi^\prime\at{\frac{\calA V_\delta}{\delta}}\,,
\end{align*}
which is well defined on the set
\begin{align}
\label{Eqn:App3}
\partial \ol{B}_1\at{0}\cap \calC
\end{align}
due to $\calG_\delta\at{V}\neq0$. By direct computation we verify that $\calC$ is invariant under both the action of $\calA$ and the superposition operator corresponding to $\Phi^\prime$, and this implies that \eqref{Eqn:App3} is invariant under the action of $\calF_\delta$. Moreover, thanks to the convexity \BMHC inequality for $\calP_\delta$ \EMHC and the identity $\norm{\calF_\delta\at{V}}_2=\norm{V}_2=1$ we obtain
\begin{align}
\label{Eqn:App4}
\begin{split}
\calP_\delta\bat{\calF_\delta\at{V}}-\calP_\delta\bat{V}&\geq \bskp{\calG_\delta\at{V}}{\calF_\delta\at{V}-V}
\\&=
\norm{\calG_\delta\at{V}}_2\bskp{\calF_\delta\at{V}}{\calF_\delta\at{V}-V}
\\&=
\tfrac12\norm{\calG_\delta\at{V}}_2\norm{\calF_\delta\at{V}-V}_2^2\geq0\,,
\end{split}
\end{align}
where $\skp{\cdot}{\cdot}$ stands for the usual $\fspace{L}^2$-inner product of $2L$-periodic functions, so applying $\calF_\delta$ increases the potential energy. In particular, any maximizer $V_\delta$ satisfies $\calP_\delta\bat{V_\delta}=\calP_\delta\bat{\calF_\delta \at{V_\delta}}$ and \eqref{Eqn:App4} implies via $\norm{\calF_\delta\at{V_\delta}-V_\delta}_2=0$ the fixed-point identity
\begin{align*}
V_\delta=\calF_\delta\at{V_\delta}\,,
\end{align*}
which is equivalent to \eqref{Eqn:TWInt} with $\la_\delta=\sqrt{\norm{\calG_\delta\at{V_\delta}}_2}>0$. In other words, the shape constraint does not contribute to the Euler-Lagrange equation and $\la_\delta^2$ can be viewed as the Lagrangian multiplier that stems from the norm constraint. A similar result has been derived in \cite{KS12,KS13} by different arguments.
\par
\ul{\emph{Convergence in the high-energy limit}}: 
\BMHC Thanks to $\Phi\at{r}\sim r^\mu \exp\at{r}$ for $r\gg1$ \EMHC we observe that
\begin{align*}
\calP_\delta\at{V_\delta}\geq \calP_\delta\at{V_0}=\int\limits_{-L}^{+L}\Phi\at{\frac{R_0\at{x}}{\delta}}\dint{x}=
2\int\limits_0^1\Phi\at{\frac{r}{\delta}}\dint{r}\geq c\exp\at{\frac{1}{\delta}}\at{\frac{1}{\delta}}^{\mu-1}\,,
\end{align*}
\BMHC where $V_0$ and $R_0=\calA V_0$ are defined in Proposition \ref{Prop:Existence}. \EMHC
On the other hand, \BMHC with $R_\delta=\calA V_\delta$ \EMHC we estimate
\begin{align*}
\calP_\delta\at{V_\delta}\leq 2L \Phi\at{\frac{R_\delta\at{0}}{\delta}} \leq C\exp\at{\frac{R_\delta\at{0}}{\delta}}\at{\frac{R_\delta\at{0}}{\delta}}^\mu\,,
\end{align*}
and combining this with 
$0\leq R_\delta\at{0}\leq \norm{V_\delta}_2=1$
we arrive at
\begin{align*}
R_\delta\at{0}\quad\xrightarrow{\delta\to0}\quad1
\end{align*}
because otherwise the lower bound for $\calP_\delta\at{V_\delta}$ would asymptotically exceed the upper bound. The convergence result \eqref{Prop:Existence.Eqn1} is now a direct consequence of
\begin{align*}
\bnorm{V_\delta-V_0}_2^2=\bnorm{V_\delta}_2^2+\bnorm{V_0}_2^2-2\bskp{V_\delta}{V_0}=
2-2R_\delta\at{0}
\end{align*}
and the estimate \eqref{Eqn:App2}.
\par
\ul{\emph{Remarks}}:
An elementary discretization (Riemann sums instead of integrals) of the improvement dynamics
\begin{align*}
V\quad \rightsquigarrow\quad \calF_\delta\at{V}\quad \rightsquigarrow\quad
\calF_\delta\bat{\calF_\delta\at{V}}\quad \rightsquigarrow\quad ...
\end{align*}
was used to compute the numerical wave profiles from Figure \ref{Fig:Profiles}, see \cite{FV99,EP05} for similar schemes. We finally mention that Proposition \ref{Prop:Existence} holds also in the solitary case $L=\infty$. The proof, however, is more complicate because the operator $\calA$ is not compact anymore. The strong compactness of any maximizing sequence has therefore to be derived from the superquadratic grow of $\Phi$ and a variant of the concentration compactness principle, see again \cite{Her10} for the details.
\section*{Acknowledgement}
The author gratefully acknowledges the support by the \emph{Deutsche Forschungsgemeinschaft} (DFG individual grant HE 6853/2-1).
%

%
%
\end{document}